\newtheorem{thm}{Theorem}[section]
\newtheorem{proposition}[thm]{Proposition}
\newtheorem{lemma}[thm]{Lemma}
\newtheorem{rmk}[thm]{Remark}
\newtheorem*{theorem*}{Theorem}
\newtheorem*{corollary*}{Corollary}
\newtheorem{mainthm}{Theorem}
\newtheorem{definition}[thm]{Definition}
\newcommand{\pp}{\mathfrak{p}}
\newcommand{\qq}{\mathfrak{q}}
\newcommand{\Pp}{\mathfrak{P}}
\newcommand{\Qq}{\mathfrak{Q}}
\renewcommand{\O}{\mathcal {O}}
\newcommand{\F}{\mathbb{F}}
\newcommand{\lv}{\mathcal{L}_{{\rm val}}}
\newcommand{\dif}{\operatorname{Diff}}
\newcommand{\br}{\operatorname{Br}}
\newcommand{\inv}{\operatorname{inv}}
\newcommand{\pol}{\operatorname{Poles}}
\newcommand{\Gal}{\operatorname{Gal}}
\newcommand{\congruente}[3]{#1 \equiv #2 (\bmod\, #3)}
\newcommand{\sqn}[1]{\sqrt[n]{#1}}
\title{Undecidability of infinite towers of Kummer extensions of $\F_p(t)$ }
\author{ Carlos  Mart\'inez-Ranero \\ Javier Utreras}
\begin{document}

\maketitle
\begin{abstract}
  We prove, assuming resolution of singularities in positive characteristic, an analogue of Siegel's theorem on sum of squares in positive characteristic. The method of proof combines techniques from central simple algebras with model theory and builds on work of Anscombe, Dittmann and Fehm.
 As an application, we show that, for each finite field $\F$ of odd characteristic and any positive integer $n$ coprime with the characteristic of $\F$, the first-order theory of the field given by the compositum of the fields generated by adjoining the $n$--th roots of all monic irreducible polynomials in $\F[t]$, of degree divisible by $n$ is undecidable in the language of rings with the variable $t$ as a constant. 
 \footnote{ The first named author was  supported by  Proyecto VRID-Investigación  220.015.024-INV  \\The second named author was supported by Proyecto VRID 2022000419-INI }
 \end{abstract}
 \section{Introduction and main results}

In \cite{robinson49}, J. Robinson posed the question of determining which algebraic extensions ( possibly infinite ones) of $\mathbb{Q}$ have a decidable first--order theory. A major breakthrough came a decade later, when J. Robinson herself demonstrated in \cite{robinson59} that all number fields are undecidable. The case of infinite algebraic extensions, however, has proven to be significantly more challenging, and progress has been slower. Nevertheless,  a well-developed set of tools and techniques for proving the undecidability of the first-order theory of infinite algebraic extensions of $\mathbb{Q}$ has emerged. These methods have been employed by various authors to construct families of undecidable infinite algebraic extensions. (see e.g.  \cite{videlaelliptic}, \cite{videlaprop},\cite{fukuzaki}, \cite{sasha},\cite{MUV},\cite{caleb},\cite{caleb2},\cite{vv}, \cite{ms}).

In this article, we investigate the analogous question in positive characteristic, with two primary goals: first, to construct new families of infinite algebraic extensions of $\mathbb{F}_p(t)$ with undecidable first-order theories; and second, to advance the theory in positive characteristic, bringing it closer to the level attained in characteristic $0$.

Before stating our main results, we need to introduce some notation. We work under the following assumption, which would follow from resolution of singularities in positive characteristic (see  \cite[Proposition 2.3]{ADFaxiom} for details).

\textbf{(R4)} Every large field $K$ is existentially closed in every extension $F/K$ for which there exists a valuation $v$ on $F/K$ with residue field $\F_v=K$. 

Let $p$ be an odd rational prime, let $q$ be a power of $p$ and let $n$ be a natural number coprime with $p$. Let $K:=\F_q(t)$ and $\O_K:=\F_q[t]$. Let $P_n^+$ denote the set of irreducible monic polynomials of $\O_K$ of degree a multiple of $n$. Suppose that there is a primitive $n$-th root of unity $\zeta_n\in K$. 
Let $\{ p_m(t): m\in\mathbb{N}\}$ be an enumeration of $P^+_n$. We define recursively a tower of fields as follows: $K_0=K$, and $K_{m+1}:=K_m\left (\sqn{p_m(t)}\right )$. Finally, let $K_{\rm inf}:=\bigcup\limits_{m=1}^\infty K_m.$
Our first main theorem is as follows.
\begin{mainthm}[see Theorem \ref{thm:kummer}] \label{mainthm:kummer}
   Assume (R4). The first-order $\mathcal{L}_{{\rm ring},t}$-theory of the field $K_{\rm inf}$ is undecidable.
\end{mainthm}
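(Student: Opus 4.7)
The plan is to show that $\F_q[t]$ (or a structure interpreting the same first-order theory) is first-order definable in $K_{\rm inf}$ using the language $\mathcal{L}_{{\rm ring},t}$, from which undecidability of $K_{\rm inf}$ follows by the known undecidability of the first-order theory of $\F_q[t]$.

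First I would apply the positive-characteristic Siegel-type theorem (the main technical result of the paper, obtained via central simple algebras). For each $p_m\in P_n^+$, the Kummer extension $K(\sqn{p_m(t)})/K$ is cyclic of degree $n$, and one associates to it cyclic algebras of degree $n$ over $K$. The reduced-norm forms of these algebras provide $\exists$-definable subsets of $K_{\rm inf}$ which, by the Siegel-type theorem, correspond to elements having the right local integrality behavior at places above $p_m$. The crucial feature is that the defining formula is uniform in $m$, so a single $\mathcal{L}_{{\rm ring},t}$-formula captures integrality at every prime of $P_n^+$ at once; the place at infinity of $K$ is handled separately using the constant $t$ in the language.

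Next I would exploit axiom (R4) together with the Anscombe-Dittmann-Fehm framework to transfer statements between $K$ and $K_{\rm inf}$. Since $K=\F_q(t)$ is large and $K_{\rm inf}/K$ admits valuations whose residue field is $K$, (R4) implies that $K$ is existentially closed in $K_{\rm inf}$ along such valuations. This allows the definable subset produced in the previous step to be sharpened to the integral ring $\O_K=\F_q[t]$, and ensures that the reduced-norm characterization does not accidentally enlarge or shrink the intended set. The Kummer structure of the tower is essential here: every element of $K_{\rm inf}$ lies in a finite Kummer stage $K_m$, so local computations at each $p_j$ reduce to a controlled finite extension.

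The main obstacle is precisely this uniformity: arranging a single first-order condition that simultaneously detects integrality at infinitely many primes $p_m$, while correctly identifying $\F_q[t]$ on the nose. This is where the combination of central simple algebras with the model-theoretic hypothesis (R4) is essential — cyclic algebras provide the $\exists$-definable norm-form sets, while (R4) collapses the Hasse-type local/global obstructions into a workable first-order statement over $K_{\rm inf}$. Once $\F_q[t]$ is interpreted, the undecidability of its first-order theory (by classical arguments in the spirit of J.~Robinson) yields the undecidability of $K_{\rm inf}$ as claimed.
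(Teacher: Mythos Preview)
Your proposal contains several genuine gaps.

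First, your invocation of (R4) is incorrect on both hypotheses. The field $K=\F_q(t)$ is \emph{not} large (global fields never are), and since $K_{\rm inf}/K$ is algebraic, every valuation on $K_{\rm inf}$ either restricts to a place of $K$ with finite residue field or is trivial; in neither case is the residue field $K$. So the sentence ``(R4) implies that $K$ is existentially closed in $K_{\rm inf}$'' has no content. In the paper, (R4) enters only inside the proof of the Siegel bound (Theorem~\ref{thm:siegel}), where it is used to identify the universal $\mathcal{L}_{\rm val}(\varpi,K)$-theory of the henselization $(F^h,\Pp^h)$ at a prime $\Pp\in\mathcal{S}^{(1,1)}_\pp$ with that of the completion $K_\pp$; it is never applied to the pair $(K,K_{\rm inf})$.

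Second, you apply the Siegel-type theorem at the wrong primes. You propose to detect integrality at the primes $p_m\in P_n^+$, but these are precisely the primes that \emph{ramify} in the tower, so $\mathcal{S}^{(1,1)}_{p_m}$ is eventually empty and $R_{p_m}$ degenerates to the whole field. The paper instead applies Theorem~\ref{thm:siegel} at the single prime $\pp_\infty$: the point of restricting to monic polynomials of degree divisible by $n$ is exactly that $\pp_\infty$ \emph{splits completely} in every $K_m$ (Proposition~6.1), so $R_{\pp_\infty}(K_m)=\bigcap_{\Pp\mid\pp_\infty}\O_\Pp$ captures the ``totally integral at infinity'' condition, and the Siegel bound makes this uniformly $\exists$-definable.

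Third, the paper does not define $\F_q[t]$ on the nose, and your sketch gives no mechanism for doing so (integrality at $P_n^+$ together with $\pp_\infty$ still leaves out infinitely many primes of degree not divisible by $n$). Instead the paper defines $\O_{K_{\rm inf}}$ via \cite[Theorem~A]{MSU}, and then reduces undecidability of $\O_{K_{\rm inf}}$ to a Julia~Robinson-type criterion: the existence of a parametrized definable family containing finite sets of arbitrarily large cardinality. The family is $\{x:\|t^{-N}x\|_{\rm max}\leq 1\}$, definable by the previous paragraph, and the non-trivial point that each such set is \emph{finite} is a separate computation (Proposition~6.4) using the explicit integral bases of the Kummer tower. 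This finiteness step is entirely absent from your outline and is where the arithmetic of the specific extension $K_{\rm inf}$ actually enters.
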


The  proof follows the usual approach. First, we apply \cite[Theorem B]{MSU} to define the ring of integers $\O_{K_{\rm inf}}$. A standard reduction argument then shows that the undecidability of $K_{\rm inf}$ can be deduced from the undecidability of $\O_{K_{\rm inf}}$.   Next, using \cite[Theorem C]{MSU} we further reduce the problem to constructing a parametrized family of definable sets which contains sets of arbitrarily large finite cardinalities. Inspired by J. Robinson's work on totally real extensions, we observe that such a family  can be constructed using an analogue of Siegel's theorem on sum of squares. 

Recall that Siegel's Theorem states that on a number field every totally positive element can be expressed as the sum of four squares (thus roughly speaking the classical Pythagoras number is $\leq 4$). For a global field $F$ and prime  $\pp$ of $F$, we define the  $\pp$-Pythagoras number (see Section 3 for the definition). Intuitively, while the Pythagoras number gives some insight into the set of totally positive elements, the $\pp$-Pythagoras number provides information about the set of totally $\pp$-integral elements.

Our next main result is the following.
\begin{mainthm}[see Theorem \ref{thm:siegel}] \label{mainthm:siegel}
   Assume (R4). Let $K$ a global function field of odd characteristic and $\pp$ a place of $K$, there exists $N$ such that $\pi_\pp(L)\leq N$ for any finite separable extension $L/K$.
\end{mainthm}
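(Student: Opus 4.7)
The plan is to mirror the classical proof of Siegel's theorem: replace ``sum of squares'' by the values of a fixed Pfister-type form built from a quaternion algebra that detects $\pp$-adic behaviour, and use (R4) to produce uniformity across the finite separable extensions $L/K$.

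First, I would fix a quaternion division algebra $D/K$ ramified exactly at $\pp$ and at one auxiliary place $\pp'$; the parity condition of Brauer--Hasse--Noether forces the choice of at least two ramified places. The reduced norm $n_D$ is then a $4$-dimensional Pfister form, anisotropic at $\pp$ (and at $\pp'$) and isotropic elsewhere. From $n_D$ I would build a form $\phi_\pp$ over $K$ --- for instance, as a sub-form or a twist of $n_D$ by a uniformizer of $\pp$ --- engineered so that for every finite separable $L/K$ the values of $\phi_\pp$ on $L$ coincide, up to a subgroup of bounded index, with the set of totally $\pp$-integral elements of $L$. Then $\pi_\pp(L)$ is controlled by the number of copies of $\phi_\pp$ needed to represent an arbitrary totally $\pp$-integral element of $L$.

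I would then bound this representation number uniformly in $L$. The idea is to work locally at places of $L$ above $\pp$. Each henselization $L_\qq$ is a finite extension of a local field of odd characteristic, so its $u$-invariant equals $4$ and an absolute local bound on the number of copies of $\phi_\pp$ needed (over $L_\qq$) exists, independently of $L$. To pass from this local bound to a global one I would invoke (R4): exhibit a larger valued field $F$ whose residue field is a suitable large enlargement of $L$ in which the existential statement ``$a$ is represented by $n$ copies of $\phi_\pp$'' can be witnessed, and then use (R4) to descend the witness back to $L$.

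The main obstacle I anticipate is twofold. First, constructing $\phi_\pp$ with the right global properties: $n_D$ detects only the parity of the valuation at $\pp$, whereas $\pp$-integrality is a one-sided condition. Combining $n_D$ with a twist by a uniformizer into a suitable Pfister-like form, and handling the auxiliary ramification at $\pp'$ by strong approximation, should bridge this gap. Second, the passage from the local bound to the global bound via (R4) is delicate because (R4) applies to large fields, whereas $L$ itself is typically not large; the technique here will be to enlarge $L$ to a large field over which (R4) applies directly, and then descend by a Galois-theoretic or approximation argument. Both steps should be accessible within the Anscombe--Dittmann--Fehm framework, but will require careful bookkeeping, especially when $L/K$ is wildly ramified at $\pp$.
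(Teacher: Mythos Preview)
Your proposal has genuine gaps, and the overall strategy does not match the paper's.

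First, the object you build is not the right one. The reduced norm $n_D$ of a quaternion algebra detects values modulo norm classes (essentially parity of valuation at the ramified places); it does not cut out a holomorphy ring, as you yourself note. The paper does \emph{not} use the norm form. Following Dittmann, it uses $T_A(F)=\{\mathrm{Trd}(x):x\in A,\ \mathrm{Nrd}(x)=1\}$, the reduced traces of norm-one elements of a central simple algebra $A$ of prime degree. The key input is that for a global function field $L$ one has $T_A(L)=\bigcap_{\qq\in\Delta(A/L)}\O_\qq$, an honest holomorphy ring. No twist by a uniformizer or Pfister-form manipulation produces this from $n_D$.

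Second, one algebra is not enough. The paper chooses \emph{two} algebras $A,B$ of degree $\ell$, both non-split at $\pp$, but with complementary ramification away from $\pp$ (for each $\qq\neq\pp$ at least one of them splits). Then $T_A(L)+T_B(L)\supseteq\bigcap_{\Pp\mid\pp}\O_\Pp$; the auxiliary ramification of each algebra is absorbed by the other via weak approximation. Your single $D$ ramified at $\pp$ and $\pp'$ leaves an unwanted integrality condition at the places above $\pp'$, and ``strong approximation'' alone does not remove it uniformly in $L$.

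Third, the Kochen operator is missing from your plan entirely, yet it is doubly essential. The very definition of $\pi_\pp$ is in terms of bounded-complexity expressions in $\gamma$; a uniform bound on ``copies of $\phi_\pp$'' does not translate into a bound on $\pi_\pp(L)$ without an additional compactness step. More importantly, even $T_A+T_B$ only gives $\bigcap_{\Pp\mid\pp}\O_\Pp$, not $R_\pp(L)=\bigcap_{\Pp\in\mathcal{S}^{(1,1)}_\pp(L)}\O_\Pp$. The Kochen operator is inserted into the denominator of the Diophantine family $D_{\pp,t_\pp,A,B}$ precisely to relax the integrality condition at those $\Pp\mid\pp$ with $e>1$ or $f>1$.

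Finally, your use of (R4) is not how the argument runs. You propose to enlarge $L$ to a large field, apply (R4) there, and descend; but $L$ is a global field, the descent is exactly the hard part, and the local $u$-invariant bound has no direct bearing on $\pi_\pp$. The paper instead invokes (R4) through the Anscombe--Dittmann--Fehm transfer principle: for any $\Pp\in\mathcal{S}^{(1,1)}_\pp(F)$ the henselization $(F^h,\Pp^h,t_\pp)$ and the completion $(K_\pp,\hat\pp,t_\pp)$ have the same universal $\lv(\varpi,K)$-theory. One checks $D(K_\pp)\subseteq\O_{\hat\pp}$ once, transfers it to every such $F^h$, and deduces $D(F)\subseteq R_\pp(F)$ for \emph{all} extensions $F/K$. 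A compactness argument then yields a single $N$ with $D(F)\subseteq R_{\pp,N}(F)$, and since $D(L)=R_\pp(L)$ for finite separable $L$, one gets $\pi_\pp(L)\leq N$.
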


 Recently,  Anscombe,  Fehm, and  Dittman  \cite{ADFsiegel} obtained a $p$--adic analogue of Siegel's theorem. Their  proof combines techniques of model theory with central simple algebras. We adapt their proof to positive characteristic. A key difficulty in this adaptation comes from the lack of an axiomatization of the first order theory of the Laurent field $\F_q((t))$ . It is worth pointing out that the search for such an axiomatization is one of the main open problems in the area. Nonetheless, in a separate  work,  Anscombe, Dittman and Fehm \cite{ADFaxiom} were able to obtain, assuming resolution of singularities in positive characteristic, an axiomatization of the universal theory of $\F_q((t))$. This turn out to be  strong enough for our purposes. 

The structure of the paper is as follows. In Section 2 we introduce the Kochen ring and prove a characterization of its integral closure. In Section 3 we define the $\pp$-Pythagoras number. In Section 4 we develop the necessary model-theoretic tools. We prove Theorem \ref{mainthm:siegel} in Section 5. Finally, in Section 6 we prove Theorem \ref{mainthm:kummer}.

\section{The Kochen ring}
We fix, for the rest of the section, a triple  $(K,\pp,t_\pp)$ where $K$ is a global function field, $\pp$ a prime of $K$ and $t_\pp$ an uniformizer. Let $v_\pp$ denote the associated discrete valuation of rank $1$ with value group $\mathbb{Z}$.  Let $q$ denote the size of the residue field $\F_\pp$.
For a field extension $F/K$ with $\Pp$ a prime of $F$ above $\pp$, we define the \emph{ramification} number $e(\Pp|\pp):=v_\Pp(t_\pp)$ and the \emph{residue degree} $f(\Pp|\pp):=[\F_\Pp: \F_\pp].$ We denote by $\mathcal{S}{(F)}$ the set of primes of $F$, $\mathcal{S}_\pp^*(F)$ the set of primes of $F$ above $\pp$  and by $\mathcal{S}^{(1,1)}_\pp(F)$ the set of primes of $F$ above $\pp$ with $e(\Pp|\pp)=1$ and $f(\Pp|\pp)=1.$ 

We consider the holomorphy ring
$$
R_\pp(F):=\bigcap\limits_{\Pp\in\mathcal{S}^{(1,1)}_\pp(F)}\O_\Pp
$$
 where $\O_\Pp$ denotes the valuation ring of $\Pp$.
 \begin{rmk}
     If $\mathcal{S}^{(1,1)}_\pp(F)=\emptyset$, then we set $R_\pp(F)=F.$
 \end{rmk}

 Let $$\Gamma_\pp(F):=\left \{\frac{a}{1+t_\pp b}: a,b \in \O_\pp[\gamma(F\setminus \pol{(\gamma)})], 1+t_\pp b\ne 0 \right \}$$ denote the Kochen ring 
 where $\gamma(x):=\frac{1}{t_\pp}\cdot\frac{x^q-x}{(x^q-x)^2-1}$ is the Kochen operator. 
 
We shall show that the holomorphy ring  above defined is equal to the integral closure of the Kochen ring. To do this, we need a few Lemmas. It is worth pointing out that most of the results of this section are well known in the case of formally $p$--adic fields (see   \cite[Chapter 6]{prestelroquette}). 

 \begin{lemma}\label{valuaciones}
Let $F/K$ be a field extension and let $\Pp\in \mathcal{S}(F)$,  $a\in F\setminus \pol(\gamma)$ be given. Let $\beta(x):=\frac{x^q-x}{(x^q-x)^2-1}$. Then 
    \begin{enumerate}[(i)]
        \item If $v_{\Pp}(a)>0,$ then $v_\Pp(\beta(a))=v_\Pp(a)>0.$
        \item If $v_\Pp(a)<0,$ then $v_\Pp(\beta(a))=-qv_\Pp(a)>0.$
        \item If $v_\Pp(a)=0$ and $v_\Pp(a^q-a)>0,$ then $v_\Pp(\beta(a))=v_\Pp(a^q-a)>0.$
        \item If $v_\Pp(a)=0$ and $v_\Pp(a^q-a)=0,$ then $v_\Pp(\beta(a))\leq 0.$
    \end{enumerate}
\end{lemma}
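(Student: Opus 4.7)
The plan is straightforward case analysis using the non-archimedean properties of $v_\Pp$, specifically the ultrametric identity $v_\Pp(x+y)=\min(v_\Pp(x),v_\Pp(y))$ whenever $v_\Pp(x)\ne v_\Pp(y)$, together with $v_\Pp(xy)=v_\Pp(x)+v_\Pp(y)$ and $v_\Pp(1)=0$. Since $\beta(a)$ is a ratio, I would compute $v_\Pp(a^q-a)$ (the numerator) and $v_\Pp((a^q-a)^2-1)$ (the denominator) separately in each of the four regimes and then subtract.

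For (i), if $v_\Pp(a)>0$ then $v_\Pp(a^q)=qv_\Pp(a)>v_\Pp(a)$, so the ultrametric identity gives $v_\Pp(a^q-a)=v_\Pp(a)>0$. Then $v_\Pp((a^q-a)^2)=2v_\Pp(a)>0$, hence $v_\Pp((a^q-a)^2-1)=0$, and the claim follows. For (ii), if $v_\Pp(a)<0$ then $v_\Pp(a^q)=qv_\Pp(a)<v_\Pp(a)$, so $v_\Pp(a^q-a)=qv_\Pp(a)$; squaring gives $v_\Pp((a^q-a)^2)=2qv_\Pp(a)<0$, so $v_\Pp((a^q-a)^2-1)=2qv_\Pp(a)$, and subtracting yields $v_\Pp(\beta(a))=-qv_\Pp(a)>0$.

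For (iii), the assumption $v_\Pp(a^q-a)>0$ directly gives $v_\Pp((a^q-a)^2)>0$, so $v_\Pp((a^q-a)^2-1)=0$ and $v_\Pp(\beta(a))=v_\Pp(a^q-a)>0$. Finally, for (iv), from $v_\Pp(a^q-a)=0$ we get $v_\Pp((a^q-a)^2)=0$, while $v_\Pp((a^q-a)^2-1)\geq 0$ in general (could be strictly positive when $a^q-a$ reduces to $\pm 1$ in the residue field). Therefore $v_\Pp(\beta(a))=0-v_\Pp((a^q-a)^2-1)\leq 0$.

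There is no serious obstacle here; the only thing to be careful about is case (iv), where equality need not hold in the ultrametric estimate for the denominator, but this is precisely why the conclusion is stated as an inequality rather than an equality. All four parts are immediate applications of the ultrametric triangle inequality and the multiplicativity of $v_\Pp$.
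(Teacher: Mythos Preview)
Your proof is correct and is precisely the ``straightforward computation'' the paper alludes to but does not write out; the paper's own proof consists of a single sentence to that effect. There is nothing to add.
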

\begin{proof}
    These are just straightforward computations.
\end{proof}

\begin{lemma}\label{lema1.2}

Let $F/K$ be a field extension and let $\Pp\in\mathcal{S}_\pp^*$. 
Then $\Pp\in \mathcal{S}^{(1,1)}_\pp(F)$  if and only if  $\pol(\gamma)=\emptyset$ and $\gamma(a)\geq 0$ for all $a\in F$. 
    
\end{lemma}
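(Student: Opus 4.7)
The plan is to prove both directions using the case analysis provided by Lemma~\ref{valuaciones}. Throughout I interpret ``$\gamma(a)\geq 0$'' as $v_\Pp(\gamma(a))\geq 0$, and I use the identity
$v_\Pp(\gamma(a)) = v_\Pp(\beta(a)) - e(\Pp|\pp)$ coming from $\gamma = \beta/t_\pp$.

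For the forward implication, I would assume $e(\Pp|\pp)=f(\Pp|\pp)=1$. The key observation is that case (iv) of Lemma~\ref{valuaciones} cannot occur: if $v_\Pp(a)=0$ then $\bar{a}\in\F_\Pp=\F_\pp$, so $\bar a^q=\bar a$ and automatically $v_\Pp(a^q-a)>0$. Hence every $a\in F$ falls into one of cases (i)--(iii), each of which gives $v_\Pp(\beta(a))\geq 1=e$ and thus $v_\Pp(\gamma(a))\geq 0$. For $\pol(\gamma)=\emptyset$, suppose $a\in F$ satisfies $a^q-a=\pm 1$; a positive or negative value of $v_\Pp(a)$ forces $v_\Pp(a^q-a)\neq 0$, contradicting $v_\Pp(\pm 1)=0$, while $v_\Pp(a)=0$ combined with $\F_\Pp=\F_\pp$ gives the absurdity $0=\bar a^q-\bar a=\pm 1$ in $\F_\pp$.

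For the reverse implication, I would test the hypothesis on carefully chosen elements. Plugging in a uniformizer $a$ of $\Pp$ puts us in case (i) of Lemma~\ref{valuaciones}, so $v_\Pp(\beta(a))=1$, and the assumption $v_\Pp(\gamma(a))\geq 0$ then forces $e(\Pp|\pp)\leq 1$; combined with the automatic lower bound $e\geq 1$, this gives $e=1$.

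The delicate step, which I expect to be the main obstacle, is ruling out $f(\Pp|\pp)>1$. If $f>1$, pick any $\bar a\in\F_\Pp\setminus\F_\pp$ and lift it to $a\in\O_\Pp$; then $v_\Pp(a)=v_\Pp(a^q-a)=0$. I split on the value of $v_\Pp((a^q-a)^2-1)$: if this is $\infty$, i.e.\ $(a^q-a)^2=1$ in $F$, then $a\in\pol(\gamma)$, contradicting the first hypothesis; otherwise the valuation is a non-negative integer, giving $v_\Pp(\beta(a))\leq 0$ and hence $v_\Pp(\gamma(a))\leq -e\leq -1$, contradicting the second hypothesis. The subtlety here is to separate the ``honest'' poles of $\gamma$ in $F$ from merely residual ones modulo $\Pp$: the two hypotheses are tailored precisely to exclude each type.
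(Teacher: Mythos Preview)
Your proof is correct and follows essentially the same route as the paper: both directions are handled via the case analysis of Lemma~\ref{valuaciones}, using that case (iv) is excluded precisely when $f(\Pp|\pp)=1$, and using case (i) (applied to an element of minimal positive value) to pin down $e(\Pp|\pp)=1$. Your treatment is in fact slightly more careful than the paper's in the reverse implication, where you explicitly split on whether a candidate $a$ with $v_\Pp(a)=v_\Pp(a^q-a)=0$ is an honest pole of $\gamma$ (ruled out by $\pol(\gamma)=\emptyset$) or not (ruled out by $v_\Pp(\gamma(a))\geq 0$); the paper tacitly folds this into the single hypothesis ``$\gamma(a)\geq 0$ for all $a\in F$''.
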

\begin{proof}
    Let $\Pp\in \mathcal{S}^{(1,1)}_\pp(F)$ be given. Let $a\in F$. Using $v_\Pp(t_\pp)=1$, we see that $v_\Pp(a)\geq 0$ for any $a$ satisfying clauses (i)-(iii) of the previous Lemma. Now we show that $a$ can not satisfy clause (iv) of Lemma \ref{valuaciones}. Indeed, we deduce from $f(\Pp|\pp)=1$ that $\mathbb{F}_\Pp\cong \F_q$. Hence, $v_\Pp(a^q-a)>0$ for any $a\in F$ with $v_\Pp(a)= 0$. Thus $\pol(\gamma)=\emptyset$ and $v_\Pp(a)\geq 0,$ for any $a\in F$.
    
Conversely, let $\Pp\in\mathcal{S}_\pp^*$ be given such that $\gamma(a)\geq 0$ for all $a\in F.$ First we show that $e(\Pp|\pp)=1.$ To see this, let $a\in F$ be given with $v_\Pp(a)>0$. We see, from clause (i) of Lemma \ref{valuaciones}, that
$0\leq v_\Pp(\gamma(a))=v_\Pp(t_\pp^{-1}\beta(a))=-v_\Pp(t_\pp)+v_\Pp(a)$. Hence, $v_\Pp(a)\geq v_\Pp(t_\pp)$ for any $a\in F$. Since $t_\pp$ has minimal positive value, we have $e(\Pp|\pp)=1$.  Next we show that $f(\Pp|\pp)=1$. To show this, let $a\in F$ with $v_\Pp(a)=0$. If $v_\Pp(a^q-a)=0,$ then $v_\Pp(\gamma(a))=v_\Pp(t_\pp^{-1}\beta(a))=-1+v_\Pp(\beta(a))<0$ (by  clause (iv) of lemma \ref{valuaciones}), which is a contradiction.

Thus, $v_\Pp(a^q-a)>0$. Since $a$ was arbitrary we conclude that $\mathbb{F}_\Pp\cong \F_q$.
\end{proof}

\begin{lemma}\label{characval}
    Let $F/K$ be a field extension.  If $\Pp$ is a prime of $F$ such that $t_\pp\in \Pp
    $ and  $\O_\pp[\gamma(F\setminus \pol(\gamma))]\subseteq \O_\Pp$, then $\Pp\in \mathcal{S}^{(1,1)}_\pp(F)$.
\end{lemma}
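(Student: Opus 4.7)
The plan is to invoke Lemma \ref{lema1.2}, which reduces the problem to three verifications: (i) that $\Pp$ lies above $\pp$, (ii) that $\pol(\gamma)=\emptyset$ in $F$, and (iii) that $v_\Pp(\gamma(a))\geq 0$ for every $a\in F$. Item (i) is immediate from the hypotheses: since $\O_\pp\subseteq\O_\pp[\gamma(F\setminus\pol(\gamma))]\subseteq\O_\Pp$ and $t_\pp\in\Pp$, the contraction $\Pp\cap K$ is a prime of $\O_\pp$ containing the maximal ideal, hence equals $\pp$, so $\Pp\in\mathcal{S}_\pp^*(F)$. Item (iii) is just the assumed inclusion $\O_\pp[\gamma(F\setminus\pol(\gamma))]\subseteq\O_\Pp$, once (ii) is in hand. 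So the only real work is to prove $\pol(\gamma)=\emptyset$.

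For (ii) I would argue by contradiction. Suppose $a\in F$ satisfies $a^q-a=\epsilon$ for some $\epsilon\in\{\pm 1\}$. The key move is to perturb $a$ by the uniformizer and consider $a':=a+t_\pp$; the goal is that $a'$ is no longer a pole of $\gamma$, yet $\gamma(a')$ falls outside $\O_\Pp$, contradicting the hypothesis. Since $q$ is a power of the characteristic, Frobenius is additive and $(a')^q-a'=\epsilon+(t_\pp^q-t_\pp)$, which is a $\Pp$-unit of $\Pp$-value $0$. Expanding, $((a')^q-a')^2-1=\pm 2(t_\pp^q-t_\pp)+(t_\pp^q-t_\pp)^2$; the odd characteristic assumption keeps $2$ a unit, so the linear term has $\Pp$-value $v_\Pp(t_\pp)\geq 1$, strictly less than the $\Pp$-value $2v_\Pp(t_\pp)$ of the quadratic term. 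Thus the denominator has $\Pp$-value exactly $v_\Pp(t_\pp)$, and in particular is nonzero, so $a'\notin\pol(\gamma)$. Finally, $v_\Pp(\gamma(a'))=-v_\Pp(t_\pp)+0-v_\Pp(t_\pp)=-2v_\Pp(t_\pp)<0$, contradicting $\gamma(a')\in\O_\Pp$.

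The main obstacle is identifying the correct perturbation of a hypothetical pole; once one sees that translating by $t_\pp$ destroys the equation $(a^q-a)^2=1$ while producing an element whose image under $\gamma$ has strictly negative $\Pp$-value, the contradiction is automatic. The computation makes essential use of the odd characteristic (to keep $2$ a unit, so that the linear term in the expansion of the denominator does not vanish) and of $q$ being a power of the characteristic (so that Frobenius is additive), which are precisely the hypotheses one expects a pole-elimination statement of this type to depend on.
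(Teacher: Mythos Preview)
Your proof is correct. The reduction to Lemma~\ref{lema1.2} and the handling of items (i) and (iii) match the paper's argument essentially verbatim. For the key step~(ii), however, you take a genuinely different route.

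The paper disposes of $\pol(\gamma)=\emptyset$ by a topological argument: $\gamma$ is continuous for the $v_\Pp$-topology, $\O_\Pp$ is closed, and the hypothesis forces $\gamma^{-1}(\O_\Pp)=F\setminus\pol(\gamma)$; hence $\pol(\gamma)$ is open, and since it is finite and the valuation topology is non-discrete, it must be empty. Your approach is instead an explicit perturbation: given a hypothetical pole $a$, you translate by $t_\pp$ and compute directly that $a'=a+t_\pp$ is not a pole yet satisfies $v_\Pp(\gamma(a'))=-2v_\Pp(t_\pp)<0$. This is really the hands-on incarnation of the same phenomenon (a pole cannot be a limit of points mapping into $\O_\Pp$), but it avoids any appeal to continuity or to general facts about valuation topologies, at the cost of a short calculation that uses the standing odd-characteristic assumption to keep $2$ invertible. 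The paper's argument is more conceptual and does not visibly require odd characteristic; yours is more self-contained and makes every constant explicit. Either is perfectly adequate here.
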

\begin{proof}
    Let $\Pp$ be a prime of $F$ as in the statement of the Lemma.  First we show that $\Pp | \pp$. Since $\O_\pp\subseteq \O_\Pp,$ we have that $\O_\pp\subseteq \O_\Pp\cap K$. On the other hand, let $a\in K\cap \O_\pp$. Using that $t_\pp$ is a uniformizer of $\pp$ we can find $u\in \O_\pp^\times$ and $n\in\mathbb{Z}$ such that $a=ut^n$. Notice that $n\geq 0$ since $t_\pp\in \Pp$ and thus in particular $t_\pp^{-1}\notin \O_\Pp$. We deduce that $a\in \O_\pp$ and hence $\O_\Pp\cap K =\O_\pp.$  We next show that $\Pp\in \mathcal{S}^{(1,1)}_\pp(F).$ Using lemma \ref{lema1.2}, it is sufficient to show that $\pol(\gamma)=\emptyset$. Indeed, since $\gamma$ is continuous (with  respect to the topology induced by $v_\pp$) and $\O_\Pp$ is a closed subset of $F$, we have  $\gamma^{-1}(\O_\Pp)=F\setminus \pol(\gamma)$ is closed in $F$. Using the fact that $\pol(\gamma)$ is a finite set  and the topology is not discrete, we obtain that $\pol(\gamma)=\emptyset$. This concludes the proof of the Lemma. 
\end{proof}

We are now ready to prove the main Theorem of the section.

 \begin{thm}\label{thmkochenring}
     The integral closure of the Kochen ring  $\Gamma_\pp(F)$ is equal to $R_\pp(F)$. 
 \end{thm}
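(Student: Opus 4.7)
The theorem is the equality $\overline{\Gamma_\pp(F)} = R_\pp(F)$, which I establish via two inclusions.

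For $\Gamma_\pp(F) \subseteq R_\pp(F)$ (which passes to integral closures since $R_\pp(F)$, being an intersection of valuation rings, is integrally closed in $F$): fix $\Pp \in \mathcal{S}^{(1,1)}_\pp(F)$. By Lemma~\ref{lema1.2}, $\pol(\gamma) = \emptyset$ and $\gamma(F) \subseteq \O_\Pp$, so $\O_\pp[\gamma(F)] \subseteq \O_\Pp$. Since $v_\Pp(t_\pp) = e(\Pp|\pp) = 1$, the element $1 + t_\pp b$ lies in $1 + \mathfrak{m}_\Pp \subseteq \O_\Pp^{\times}$ for any $b \in \O_\Pp$ with $1 + t_\pp b \neq 0$. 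Hence every generator $a/(1 + t_\pp b)$ of $\Gamma_\pp(F)$ belongs to $\O_\Pp$, and intersecting over $\Pp$ gives the inclusion.

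For the reverse $R_\pp(F) \subseteq \overline{\Gamma_\pp(F)}$, I use the classical fact that the integral closure of $\Gamma_\pp(F)$ in $F$ coincides with the intersection of all valuation rings of $F$ containing $\Gamma_\pp(F)$; it therefore suffices to verify $R_\pp(F) \subseteq V$ for every such $V$. Since $\O_\pp \subseteq V$, the restriction $V \cap K$ equals $\O_\pp$ or $K$. If $V \cap K = \O_\pp$, then $t_\pp \in \mathfrak{m}_V$, and the proof of Lemma~\ref{characval} carries over with $V$ in place of $\O_\Pp$: the continuity-of-$\gamma$ argument forcing $\pol(\gamma) = \emptyset$ is topological and applies to any nontrivially valued field, while the deductions through Lemma~\ref{valuaciones} are purely valuation-theoretic. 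One concludes that $V$ coincides with (or dominates) some $\O_\Pp$ with $\Pp \in \mathcal{S}^{(1,1)}_\pp(F)$, whence $R_\pp(F) \subseteq \O_\Pp \subseteq V$. In the residual case $V \cap K = K$ (so $t_\pp$ is a unit of $V$), one reduces to the first case by composing $V$ with an extension of the $\pp$-adic place to the residue field $\bar V \supseteq K$: this yields refinements $\Pp \in \mathcal{S}^{(1,1)}_\pp(F)$ detecting every element of $F$ not in $V$, which suffices for $R_\pp(F) \subseteq V$.

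The main obstacle is this second case $V \cap K = K$, which requires constructing discrete rank-$1$ refinements of $V$ landing in $\mathcal{S}^{(1,1)}_\pp(F)$ and uses the arithmetic of $K$ together with the abundance of $(1,1)$-primes above $\pp$. In the algebraic setting relevant to the paper's main application, every nontrivial valuation of $F$ restricts nontrivially to $K$, so this case does not arise and the argument reduces to the first case alone.
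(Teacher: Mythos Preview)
Your first inclusion and your Case~1 are essentially what the paper does. The genuine gap is Case~2, and the hedge in your last paragraph does not rescue the theorem as stated, which is asserted for an \emph{arbitrary} extension $F/K$.

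The composition-of-places idea you sketch for Case~2 is not justified. Writing $V'$ for the refinement of $V$ obtained by composing with an extension $w$ of $v_\pp$ to the residue field $\bar V$, two things go wrong. First, you only know $\Gamma_\pp(F)\subseteq V$, not $\Gamma_\pp(F)\subseteq V'$, so you cannot simply invoke Case~1 for $V'$. Second, there is no reason the extension $w$ can be chosen with residue field $\F_q$ (think of $\bar V$ containing a residue extension of $\F_\pp$), so $V'$ need not land in $\mathcal S^{(1,1)}_\pp(F)$ at all. Your phrase ``detecting every element of $F$ not in $V$'' therefore has no proof behind it.

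The paper avoids Case~2 by a different decomposition. It first separates off the case where $t_\pp$ is already a unit in $\O_\pp[\gamma(F\setminus\pol(\gamma))]$; then Merkel's Lemma forces $\Gamma_\pp(F)=F$ and the statement is trivial. In the remaining case the paper uses the refined form of the integral-closure theorem (Engler--Prestel, Theorem~3.1.3): it suffices to intersect only over those valuation rings $\O_\Pp\supseteq\Gamma_\pp(F)$ for which $\Pp\cap\Gamma_\pp(F)$ is a \emph{maximal} ideal of $\Gamma_\pp(F)$. The key computation, absent from your attempt, is that \emph{every} maximal ideal $\mathfrak m$ of $\Gamma_\pp(F)$ contains $t_\pp$: otherwise some $a=b/(1+t_\pp c)$ with $b,c\in\O_\pp[\gamma(F\setminus\pol(\gamma))]$ satisfies $at_\pp\equiv 1\pmod{\mathfrak m}$, so $1+t_\pp(c-b)\in\mathfrak m$; but $1+t_\pp(c-b)\neq 0$ (as $t_\pp$ is not a unit in the base ring), hence it is a unit of $\Gamma_\pp(F)$ lying in $\mathfrak m$, a contradiction. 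Consequently every valuation in the relevant family has $t_\pp$ in its maximal ideal, so your Case~2 simply never occurs and Lemma~\ref{characval} applies directly. This is the step you are missing.
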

 \begin{proof}
     Notice that if  $t_\pp$ is a unit in  $\O_\pp[\gamma(F)\setminus \pol(\gamma)]$, then $1+t_\pp\O_\pp[\gamma(F)\setminus \pol(\gamma)]=\O_\pp[\gamma(F)\setminus \pol(\gamma)]$. Thus, $\Gamma_\pp(F)$ is the fraction field of $\O_\pp[\gamma(F\setminus \pol(\gamma)].$ Applying Merkel's Lemma (see appendix in \cite{prestelroquette}), we obtain that  $\Gamma_\pp(F)=F$. 
     
     Next suppose $t_\pp$ is not unit in  $\O_\pp[\gamma(F)\setminus \pol(\gamma)]$. Define   $$\mathbb{V}:=\{\Pp\in \mathcal{S}(F): \Gamma_\pp(F)\subseteq \O_\Pp, \ \Gamma_\pp(F)\cap \Pp \ \rm{is \ a \ maximal \ ideal\ of} \ \Gamma_\pp(F)\}.$$ Recall that the integral closure of $\Gamma_\pp(F)$ is equal to $\bigcap\limits_{\Pp\in \mathbb{V}}\O_\Pp$ (see  \cite[Theorem 3.1.3]{englerprestel}). Thus, it suffices to show that $\mathbb{V}=\mathcal{S}^{(1,1)}_\pp(F)$.\\
     
  Let $\Pp\in \mathcal{S}^{(1,1)}_\pp(F)$ be given.  We deduce, from Lemma \ref{lema1.2}, that  $\O_\pp[\gamma(F)]\subseteq \O_\Pp$ and thus also $\Gamma_\pp(F)\subseteq \O_\Pp$ (since $v_\Pp(t_\pp)=1).$ We claim that  $\mathfrak{m}:=\Pp\cap \Gamma_\pp(F)$ is a maximal ideal. To verify this, notice that $\Gamma_\pp(F)/\mathfrak{m}$ is a field since it is a subring of $\F_\Pp=\F_q.$ Therefore $\mathfrak{m}$ is a maximal ideal and thus $\Pp\in \mathbb{V}.$

 On the other hand, let $\Pp\in \mathbb{V}$ be given. In view of the previous Lemma, it is sufficient to show that $t_\pp\in \Pp
    $ and  $\O_\pp[\gamma(F\setminus \pol(\gamma))]\subseteq \O_\Pp$.  Observe that $\O_\pp[\gamma(F\setminus \pol(\gamma))]\subseteq \Gamma_\pp(F)\subseteq \O_\Pp$. We claim that all maximal ideals of $\Gamma_\pp(F)$ contain $t_\pp$. Aiming towards a contradiction, suppose that there is a maximal ideal $\mathfrak{m}$ which does not contain $t_\pp$. We deduce that $t_\pp$ is a unit modulo $\mathfrak{m}$. Thus, there is an $a\in \Gamma_\pp(F)$  such that $\congruente{at_\pp}{1}{\mathfrak{m}}.$ Write $a:=\frac{b}{1+t_\pp c}$ where $b, c\in \O_\pp[\gamma(F\setminus \pol(\gamma))].$ Then $\congruente{1+t_\pp(c-b)}{0}{\mathfrak{m}}.$ Notice that $1+t_\pp(c-b)\ne 0$ as $t_\pp$ is not a unit in $\O_\pp[\gamma(F\setminus \pol(\gamma))]$ by assumption. Therefore $\congruente{1}{0}{\mathfrak{m}}$ which is a contradiction. Thus $t_\pp\in \mathfrak{m}=\Gamma_\pp(F)\cap\Pp$. We conclude the proof of the Theorem.
    \end{proof}
    
The following Lemma will be needed in the Proposition \ref{keyprop}.
\begin{lemma}\label{keylemma}
    Let $F/K$ be a finite separable field extension and let $\Pp\in \mathcal{S}^*_\pp(F)\setminus\mathcal{S}^{(1,1)}_\pp(F)$ be given. Then there exists $a\in F$ such that $a$ is not a pole of $\gamma$ and $v_\Pp(\beta(a))\leq 0.$
\end{lemma}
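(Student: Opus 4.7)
The plan is to locate $a\in F\setminus\pol(\gamma)$ that lies in clause~(iv) of Lemma~\ref{valuaciones}, for of the four clauses only (iv) produces $v_\Pp(\beta(a))\leq 0$; clauses (i)--(iii) each force $v_\Pp(\beta(a))>0$. Clause~(iv) requires $v_\Pp(a)=0$ together with $v_\Pp(a^q-a)=0$, which, on reducing modulo $\Pp$, is equivalent to $\bar a\in\F_\Pp\setminus\F_q$, since $\bar a^q=\bar a$ in $\F_\Pp$ holds exactly on the subfield $\F_q$. The side-condition $a\notin\pol(\gamma)$ is minor because $\pol(\gamma)=\{x\in F:(x^q-x)^2=1\}$ is a finite set of at most $2q$ elements, cut out by a polynomial of degree $2q$.

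In the subcase $f(\Pp|\pp)>1$, the residue field $\F_\Pp$ properly contains $\F_q$, so I would pick any $\bar a\in\F_\Pp\setminus\F_q$, lift to $a_0\in\O_\Pp\subseteq F$, and shift within the infinite coset $a_0+(\Pp\cap F)$ to avoid $\pol(\gamma)$. Every element of this coset is a lift of $\bar a$, hence satisfies $v_\Pp(a)=0$ and $v_\Pp(a^q-a)=0$; clause~(iv) of Lemma~\ref{valuaciones} then delivers $v_\Pp(\beta(a))\leq 0$.

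The main obstacle is the remaining subcase $f(\Pp|\pp)=1$ with $e(\Pp|\pp)>1$. Here $\F_\Pp=\F_q$, so every unit $a\in\O_\Pp^\times$ has $\bar a\in\F_q$ and hence $v_\Pp(a^q-a)>0$, placing $a$ in clause~(iii); elements with $v_\Pp(a)<0$ fall under (ii), and elements with $v_\Pp(a)>0$ under (i). Since the residue-field maneuver from the previous paragraph is blocked, I would instead exploit the ramification $e>1$ to manufacture $a\in F$ whose $\Pp$-adic expansion makes $(a^q-a)^2$ congruent to $1$ modulo a high power of $\Pp$ without actually hitting $\pm 1$: then $v_\Pp((a^q-a)^2-1)$ becomes strictly positive while $v_\Pp(a^q-a)$ stays controlled, forcing the ratio defining $\beta(a)$ to acquire the desired non-positive valuation. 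Producing this $a$ inside $F$ rather than in the completion $F_\Pp$ is the delicate step, and is where finite separability of $F/K$ enters, via a weak approximation / Hensel-style lifting argument that guarantees the approximate identity $(a^q-a)^2\equiv 1\pmod{\Pp^N}$ can be realised in $F$ while keeping $a$ outside the finite exceptional locus $\pol(\gamma)$. The verification that this $\Pp$-adic engineering indeed lands in $F$ and realises $v_\Pp(\beta(a))\leq 0$ is the crux of the argument.
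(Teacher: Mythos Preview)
Your subcase $f(\Pp|\pp)>1$ is handled correctly; the paper reaches the same clause~(iv) conclusion, routing there via the contrapositive of Lemma~\ref{lema1.2} rather than by a direct residue-field lift, but the content is the same.

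The genuine gap is your subcase $e>1$, $f=1$. The plan to arrange $(a^q-a)^2\equiv 1\pmod{\Pp^N}$ while keeping $a\notin\pol(\gamma)$ cannot succeed: since $\F_\Pp=\F_q$, every $a$ with $v_\Pp(a)\geq 0$ has $v_\Pp(a^q-a)\geq 1$, hence $(a^q-a)^2\equiv 0\pmod{\Pp}$ and $(a^q-a)^2-1$ is a $\Pp$-unit; if instead $v_\Pp(a)<0$ then $(a^q-a)^2-1$ has strictly negative valuation $2qv_\Pp(a)$. There is no intermediate regime, so no weak-approximation or Hensel manoeuvre can produce the configuration you describe. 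Indeed, your own opening observation---that clauses (i)--(iii) of Lemma~\ref{valuaciones} all force $v_\Pp(\beta(a))>0$ and that (i)--(iv) exhaust $F\setminus\pol(\gamma)$---already implies that when $f=1$ there is \emph{no} $a$ with $v_\Pp(\beta(a))\leq 0$, so the engineering you sketch is doomed from the start. The paper does not attempt clause~(iv) here: it splits on $e$ rather than $f$, and for $e>1$ simply takes $a=t_\Pp$ a uniformizer of $\Pp$, invoking clause~(i). Note, however, that clause~(i) gives $v_\Pp(\beta(t_\Pp))=v_\Pp(t_\Pp)=1$, not the displayed value $-e(\Pp|\pp)$; what the uniformizer genuinely delivers is $v_\Pp(\gamma(t_\Pp))=1-e<0$. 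So the paper's argument in this subcase slips on the $\beta$/$\gamma$ distinction, and the lemma as literally stated about $\beta$ does not hold when $e>1$ and $f=1$---your case analysis in fact proves this.
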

\begin{proof}
    First suppose $e(\Pp|\pp)>1$. Let $t_\Pp\in \O_\Pp$ be a uniformizer. Notice that $2=2v_\Pp(t_\Pp)\leq e(\Pp|\pp)=v_\Pp(t_\pp).$ Using  Lemma \ref{valuaciones} clause (i) we see that $v_\Pp(t_\pp\gamma(t_\Pp))=v_\Pp(\beta(t_\Pp))=-e(\Pp|\pp)\leq -2.$ Thus $a=t_\Pp$ is as required. Next suppose $e(\Pp|\pp)=1$ and 
    $f(\Pp|\pp)>1$. Let $a\in F\setminus \pol(\gamma)$ be such that $v_\Pp(a)<0$ ($a$ exists by Lemma \ref{lema1.2}). Using  $e(\Pp|\pp)=1$ we see that $v_\Pp(a)\geq 0$ for any $a\in F\setminus \pol(\gamma)$ which satisfies any of the hypothesis (i)--(iii) of Lemma \ref{valuaciones}.  Thus $a$ satisfies $v_\Pp(a)=0, v_\Pp(a^q-q)=0$, and it is also not a pole of $\gamma$. Hence $v_\Pp(\beta(a))\leq 0 $ which is what we wanted to prove. 
\end{proof}


    \section{The $\pp$-Pythagoras number}	
We recall the definition of $\pp$-Pythagoras number introduced by S. Anscombe, P. Dittman and A. Fehm (see \cite{ADFsiegel}).

Let $F/K$ be a field extension. For any $g\in\O_\pp[x_1,\dots,x_n]$, we write $$ R_{\pp,g,t_\pp}(F):=\left \{\frac{a}{1+t_\pp b}: a,b\in g(\gamma(F\setminus \pol{(\gamma)}),\dots,\gamma(F\setminus \pol{(\gamma}))), 1+t_\pp b\ne0\right \},$$
and for $n\geq 1,$ we set $R_{\pp,g,t_\pp,n}(F)$ 

$$\left \{x\in F: x^m+a_{m-1}x^{m-1}+\dots+a_0=0, 1\leq m\leq n, a_{m-1},\dots,a_0\in R_{\pp,g,t_\pp}(F) \right \}.$$

Let $\mathcal{P}_{\pp,n}$ denote the set of polynomials $g\in\O_\pp[x_1,\dots,x_n]$ of degree and height at most $n$. Let
$$ R_{\pp,n}(F):=\bigcup\limits_{t_\pp}\bigcup\limits_{g\in \mathcal{P}_{\pp,n}}R_{\pp,g,t_\pp,n}(F)$$
where $t_\pp$ varies over the (finite) set of uniformizers of $\pp$ of minimal height. Note that $\left ( R_{\pp,n}(F)\right )_{n\geq 1} $ is an increasing sequence and $R_\pp(F)=\bigcup\limits_n R_{\pp,n}(F) $. 

The $\pp$-{\bf Pythagoras\ number} $\pi_\pp(F)$ is the minimum $n\in \mathbb{N}\cup\{\infty\}$ such that $$R_\pp(F)=\bigcup\limits_{k\leq n} R_{\pp,k}(F).$$ 

\section{Diophantine families}
In this section we introduce the model theoretic tools that we need to prove the analogue of Siegel's Theorem.

We first recall the notion of diophantine family introduced by S. Anscombe, P. Dittman and A. Fehm (see \cite{ADFsiegel}). An $n$-dimensional diophantine family over $K$ is a map $D$ from the class of field extensions $F/K$ to sets which is given by finitely many polynomials $f_1,\dots,f_r\in K[X_1,\dots,X_n,Y_1,\dots,Y_m],$ for some $m$, in the sense that 
$$
D(F)=\{x\in F^n: \exists y\in F^m [f_1(x,y)=0,\dots,f_r(x,y=0]\}
$$
for every extension $F/K$. From the point of view of model theory, $D$ is given by an existential formula $\varphi(x_1,\dots,x_n)$ in the language of rings with free variables $x_1,\dots,x_n$ and parameters from $K$, and $D(F)$ is the realization of $\varphi$ in $F$, i.e., $D(F)=\{x\in F^n: F\models \varphi(a)\}$.

Diophantine families are closed under unions, intersections, cartesian products and images of rational functions, in the sense that there is a diophantine family $D$, such that for any field extension $F/K,$ $D(F)$ realizes the corresponding operation (see  \cite[Section 3]{ADFsiegel} for details).

We denote by $\mathcal{L}_{\rm rings}=\{+,-,\cdot,0,1\}$ the language of rings, by $\mathcal{L}_{\rm val}=\mathcal{L}_{\rm rings}\cup \{\O\}$ the language of valued fields, where $\O$ is a unary predicate symbol, and by $\mathcal{L}_{\rm val}(\varpi)=\mathcal{L}_{\rm val}\cup\{\varphi)$, where $\varpi$ is a constant symbol.   We view a valued field $(K,v)$ as an $\mathcal{L}_{\rm val}$-structure by interpreting $\O$ as the valued ring $\O_v$. A uniformizer $\pi$ of $K$ is an element of minimal positive value. We view $(K,v,\pi)$ as an  $\mathcal{L}_{\rm val}(\varpi)$-structure where $\varpi$ is interpreted as $\pi$.

Finally, given a field extension $K/C$ we denote by $\mathcal{L}_{\rm val}(\varpi,C)$ the expansion of $\mathcal{L}_{\rm val}(\varpi)$ where we add a constant symbol for each element of $C$, and $(K,v,\pi,C)$ denotes the $\mathcal{L}_{\rm val}(\varpi,C)$-structure expanding $(K,v,\pi)$ in the usual way, i.e., the constant symbol $c_x$ is interpreted by $x$.
As mentioned in the introduction to this article, we will need the following assumption, which follows from resolution of singularities in positive characteristic (see  \cite[Proposition 2.3]{ADFaxiom} for details).

 (R4)\label{R4}  Every  large  field $K$ is existentially closed in every extension $F/K$ for which there exists a valuation $v$ on $F/K$ with residue field $\F_v=K.$ 
\begin{definition}
    We denote by $T$ the $\mathcal{L}_{\rm val}(\varpi)$-theory of equicharacteristic henselian valued fields with distinguished uniformizer.
\end{definition}
\begin{proposition}{\rm \cite[Proposition 4.11]{ADFaxiom}}\label{propADFaxiom}
    Assume {\rm (R4)}. Let $(C, u)$ be an equicharacteristic $\mathbb{Z}$-valued field with distinguished uniformizer $\pi$ such that $\O_u$ is excellent. Let $(K, v, \pi)$, $(L, w, \pi) \models T$ be extensions of $(C, u, \pi)$ such that $\F_v/\F_u$ and $\F_w/\F_u$ are separable. If ${\rm Th}_\exists(\F_v,\F_u) \subseteq {\rm Th}_\exists (\F_w,\F_u)$, then ${\rm Th}_\exists(K,v,\pi,C)\subseteq {\rm Th}_\exists(L,w,\pi,C)$.
\end{proposition}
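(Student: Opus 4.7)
The plan follows the classical Ax--Kochen--Ershov strategy: reduce the inclusion of existential theories to an embedding problem, embed residue fields using the hypothesis, then lift to a valued-field embedding using (R4).

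\textbf{Setup and residue-field embedding.}
By a standard compactness argument, to show ${\rm Th}_\exists(K,v,\pi,C)\subseteq {\rm Th}_\exists(L,w,\pi,C)$ it suffices to produce, for some $|K|^+$-saturated elementary extension $(L^*,w^*,\pi,C)\succeq (L,w,\pi,C)$, an $\mathcal{L}_{\rm val}(\varpi,C)$-embedding $\sigma\colon (K,v,\pi)\hookrightarrow (L^*,w^*,\pi)$ fixing $C$ pointwise. The residue field $\F_{w^*}$ is then a sufficiently saturated elementary extension of $\F_w$ as an $\mathcal{L}_{\rm rings}(\F_u)$-structure, so the assumption ${\rm Th}_\exists(\F_v,\F_u)\subseteq {\rm Th}_\exists(\F_w,\F_u)$ yields an $\F_u$-embedding $\iota\colon \F_v\hookrightarrow \F_{w^*}$. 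The separability of $\F_v/\F_u$ and $\F_w/\F_u$ ensures that $\iota(\F_v)$ and the surrounding residue fields sit separably over $\F_u$, which is what enables the lifting step.

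\textbf{Lifting via amalgamation and (R4).}
The heart of the proof is to construct a henselian equicharacteristic $\mathbb{Z}$-valued field $(M,\omega,\pi)\models T$ together with an $\mathcal{L}_{\rm val}(\varpi,C)$-embedding $K\hookrightarrow M$ and a field embedding $L^*\hookrightarrow M$ over $C$ such that (i) the residue-field maps are compatible with $\iota$, and (ii) the extension $M/L^*$ admits a valuation, trivial on $L^*$, with residue field $L^*$. I would obtain $M$ by amalgamating $K$ and $L^*$ over $C$ --- for instance as a localization and henselization of a suitable quotient of $K\otimes_C L^*$ whose residue data realize $\iota$. Excellence of $\O_u$ is used to ensure that this amalgam is geometrically regular over $L^*$, and combined with the separability hypotheses this yields the valuation required in (ii). Then (R4), applied to the large (because henselian) field $L^*$ and the extension $M/L^*$, gives that $L^*$ is existentially closed in $M$ as a field. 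Combined with an existential definition of the valuation ring $\O_{w^*}$ in equicharacteristic henselian valued fields using only the constant $\varpi$ (via a Hensel-lemma-style formula), this upgrades to an $\mathcal{L}_{\rm val}(\varpi,C)$-existential closure of $L^*$ in $M$, and composing with $K\hookrightarrow M$ produces the desired $\sigma$.

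\textbf{Main obstacle.}
The technical crux is condition (ii): constructing $M$ so that $M/L^*$ actually admits a valuation with residue field exactly $L^*$. In residue characteristic zero the analogous statement follows automatically from Ax--Kochen--Ershov; in positive characteristic, the excellence of $\O_u$ and the separability of $\F_v/\F_u$ and $\F_w/\F_u$ are precisely what is needed to circumvent the classical obstructions (failure of the Cohen structure theorem to produce compatible coefficient fields, inseparable residue-field extensions, non-regular completions) and make the amalgamation succeed. The role of (R4) is then to absorb the remaining gap between field-theoretic and valued-field-theoretic behaviour in positive characteristic.
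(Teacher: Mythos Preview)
The paper does not prove this proposition; it is quoted from \cite[Proposition 4.11]{ADFaxiom} and used as a black box, so there is no in-paper proof to compare against.

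Your outline is the right shape for an AKE-style argument, and one ingredient you invoke is genuinely available: in any model of $T$ the valuation ring is existentially $\mathcal{L}_{\rm ring}(\varpi)$-definable (for instance via $\exists y\,(y^2=1+\varpi x^2)$ in odd characteristic, or an Artin--Schreier variant $\exists y\,(y^2+y=\varpi x^2)$ in characteristic $2$, using that $\varpi$ has minimal positive value so $1\notin 2\Gamma$). Combined with the observation that $x\notin\O$ is equivalent to $x\neq 0\wedge (\varpi x)^{-1}\in\O$, this does upgrade field-theoretic existential closedness to $\mathcal{L}_{\rm val}(\varpi,C)$-existential closedness, so that step is fine.

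The real gap is the one you yourself flag as the ``main obstacle'': you do not construct $M$. Saying ``a localization and henselization of a suitable quotient of $K\otimes_C L^*$'' is a placeholder, not a construction, and the crucial requirement---that $M/L^*$ admit a valuation trivial on $L^*$ with residue field exactly $L^*$---does not come for free from any such amalgam. Your appeal to excellence (``ensures the amalgam is geometrically regular over $L^*$'') is a slogan rather than an argument: excellence of $\O_u$ gives you that $\widehat C/C$ is separable, and one must trace concretely how this, together with the separability hypotheses on $\F_v/\F_u$ and $\F_w/\F_u$, produces the required valuation on $M$. Until that step is written out, what you have is a plausible plan with the hard lemma missing, which is precisely why the present paper imports the result rather than reproving it.
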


\begin{rmk}
    If $(C,u)$ is a global function field then the valuation ring $\O_u$ is excellent.
\end{rmk}

\begin{proposition}\label{propdioph} Assume ${\rm (R4)}$.
    Let $D$ be a 1-dimensional Diophantine family and $\mathcal{K}$ a class of extensions of $K$. If
    \begin{enumerate}[(i)]
        \item $D(L)=R_\pp(L)$ for any $L\in \mathcal{K}$, and
        \item $D(K_\pp)\subseteq \O_{\hat \pp},$
    \end{enumerate}
    then there exists $N$ such that $\pi_\pp(L)\leq N$ for all $L\in\mathcal{K}.$
\end{proposition}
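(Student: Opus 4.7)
The plan is to argue by contradiction using an ultraproduct, then to transfer hypothesis~(ii) from the completion $K_\pp$ to the ultraproduct via Proposition~\ref{propADFaxiom}. Suppose no such $N$ exists. Then for each $N \in \mathbb{N}$ we may pick $L_N \in \mathcal{K}$ and $x_N \in R_\pp(L_N) \setminus \bigcup_{k \leq N} R_{\pp,k}(L_N)$, and by (i) we have $x_N \in D(L_N)$. Let $\phi(x)$ be an existential $\mathcal{L}_{\rm rings}$-formula with parameters in $K$ defining $D$, and for each $k$ let $\theta_k(x)$ be a uniform existential formula defining $R_{\pp,k}(\cdot)$; such $\theta_k$ exist because the Kochen operator is a rational function over $\O_\pp$, $\mathcal{P}_{\pp,k}$ and the set of minimal-height uniformizers are finite, and bounded-degree integrality is existential. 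Fix a non-principal ultrafilter $\mathcal{U}$ on $\mathbb{N}$, let $L^* := \prod L_N/\mathcal{U}$ and $x^* := [x_N]_\mathcal{U}$. By \L o\'s's theorem $L^* \models \phi(x^*)$ while $L^* \models \neg \theta_k(x^*)$ for every $k$, so $x^* \in D(L^*)$ and $x^* \notin \bigcup_k R_{\pp,k}(L^*) = R_\pp(L^*)$. It therefore suffices to prove $D(L^*) \subseteq R_\pp(L^*)$.

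If $\mathcal{S}^{(1,1)}_\pp(L^*) = \emptyset$ the inclusion is trivial, so fix $\Pp \in \mathcal{S}^{(1,1)}_\pp(L^*)$ and $y \in D(L^*)$; we must show $y \in \O_\Pp$. Pass to the henselization $(L^{*,h}, v^h)$ of $(L^*, v_\Pp)$ with distinguished uniformizer $t_\pp$. Since $e(\Pp|\pp) = f(\Pp|\pp) = 1$, the valued field $(L^{*,h}, v^h, t_\pp)$ is a model of $T$ extending $(K, v_\pp, t_\pp)$ with residue field $\F_\pp$, exactly like $(K_\pp, v_{\hat\pp}, t_\pp)$. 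Applying Proposition~\ref{propADFaxiom} in both directions, the residue-field hypothesis being symmetric since $\F_{v^h} = \F_{v_{\hat\pp}} = \F_\pp$ and $\O_\pp$ being excellent because $K$ is a global function field, yields
\[
\mathrm{Th}_\exists(K_\pp, v_{\hat\pp}, t_\pp, K) \;=\; \mathrm{Th}_\exists(L^{*,h}, v^h, t_\pp, K).
\]
Writing $\phi(x) \equiv \exists \bar z\, \psi(x, \bar z)$ with $\psi$ quantifier-free, hypothesis~(ii) asserts the universal $\mathcal{L}_{\rm val}(\varpi, K)$-sentence $\forall x \forall \bar z \, (\psi(x, \bar z) \to \O(x))$ holds in $(K_\pp, v_{\hat\pp}, t_\pp, K)$; by the displayed equality it also holds in $(L^{*,h}, v^h, t_\pp, K)$. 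Existential preservation along $L^* \hookrightarrow L^{*,h}$ gives $L^{*,h} \models \phi(y)$, whence $v^h(y) \geq 0$, and so $v_\Pp(y) \geq 0$, i.e., $y \in \O_\Pp$. Running over all such $\Pp$ yields $y \in R_\pp(L^*)$, producing the desired contradiction with $x^* \notin R_\pp(L^*)$.

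The main obstacle is checking the hypotheses of Proposition~\ref{propADFaxiom} for the ultraproduct: that $(L^{*,h}, v^h, t_\pp)$ is a legitimate $T$-model over $(K, v_\pp, t_\pp)$ with the required residue-field behaviour, which is precisely where the global assumption (R4) plays its role, since Proposition~\ref{propADFaxiom} depends on it. A minor bookkeeping step is the uniform existential definability of each $R_{\pp,k}(F)$, which is straightforward from the explicit description in Section~3.
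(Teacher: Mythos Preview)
Your proof is correct and follows essentially the same approach as the paper's: both use the transfer via Proposition~\ref{propADFaxiom} (this is where (R4) enters) to move hypothesis~(ii) from $K_\pp$ to the henselization at any $\Pp\in\mathcal{S}^{(1,1)}_\pp$, combined with a compactness step to obtain the uniform bound~$N$. The only difference is packaging: the paper first proves $D(F)\subseteq R_\pp(F)$ for \emph{every} extension $F/K$ and then invokes \cite[Proposition~3.9]{ADFsiegel} as a black-box compactness result, whereas you unfold that compactness into an explicit ultraproduct argument and apply the transfer only at the ultraproduct $L^*$.
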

\begin{proof}
 We claim that $D(F)\subseteq R_\pp(F)$ for any field extension $F/K$. To do this,  let $F$ be any extension of $K$. Suppose first that  $\mathcal{S}^{(1,1)}_\pp(F)=\emptyset$. Then $D(F)\subseteq F=\Gamma_\pp(F)=R_\pp(F)$ (see Theorem \ref{thmkochenring}). Now assume that  $\mathcal{S}^{(1,1)}_\pp(F)\ne\emptyset$, and let $\Pp\in \mathcal{S}^{(1,1)}_\pp(F) $ be given.  Consider the $\mathcal{L}_{\rm val}(\varpi)$-structures  $(F^h,\Pp^h, t_\pp)$ and $(K_\pp,\hat{\pp}, t_\pp)$ where the former  denotes the henselization of $(F,\Pp, t_\pp)$ and the latter  the $\pp$-adic completion of $K$.  It follows from our assumptions that $\F_{\pp^h}\cong \F_\pp\cong \F_{\hat \Pp} $ and both are $\mathbb{Z}$-valuations. Applying  Proposition \ref{propADFaxiom} to the equicharacteristic $\mathbb{Z}$-valued field $(K,v_\pp)$ with distinguished uniformizer $t_\pp$, we obtain  that  the universal $\mathcal{L}_{\rm val}(\varpi)$-theories  ${\rm Th}_\forall(F^h,\Pp^h, K)$ and ${\rm Th}_\forall(K_\pp, \hat{\pp},K)$ are equal.  Let $\varphi(x,\overline{c})$ be the existential formula defining $D$ over $K$. By clause (ii) we have that the universal $\lv(\varpi,K)$-formula $\Theta(x): \forall x [\neg \varphi(x,\overline{c})\vee x\in \O_v]$ holds in $(K_\Pp,v_\Pp)$, and so also holds in the structure $(F^h,v_{\Pp^h})$.  Thus, $D(F)\subseteq \O_{\Pp^h}\cap F=\O_\Pp.$ Since $R_\pp(F)=\bigcup_{n=1}^\infty R_{\pp,n},$ then there is an $N$ such that $D(F)\subseteq R_{\pp,N}$ for all field extensions $F/K$ (by \cite[Proposition 3.9]{ADFsiegel}). Notice that for $L\in \mathcal{K}$
$R_\pp(L)=D(L)\subseteq R_{\pp,N}$. Therefore, $\pi_\pp(L)\leq N$. This concludes the proof of the proposition. \end{proof}
\section{Central simple algebras}
The goal of this section is to  find a $1$--dimensional diophantine family satisfying the assumptions of Proposition \ref{propdioph}. To do this, we will use some techniques from the theory of central simple algebras. This method was developed by P. Dittmann \cite{Dittmann} building on work of B. Poonen \cite{Poonen}. For further details on central simple algebras, the Brauer group and  associated concepts see \cite{GilleSzamuely}.

Let $A$ a central simple algebra of prime degree $\ell$ over a field $F$. We let
$$ S_A(F):=\{Trd(x): x\in A, Nrd(x)=1\}\subseteq F,$$
where $Trd$ and $Nrd$ are the reduced trace and reduced norm, respectively. We define

$$T_A(F):=\begin{cases}
    S_A(F) & {\rm if}\ \ell>2 \\
    S_A(F)-S_A(F) & {\rm if}\ \ell=2
\end{cases}$$
\begin{lemma}{\rm \cite[Lemma 2.12]{Dittmann}}
    $S_A$ and $T_A$ are $1$-dimensional diophantine families over $F$.
\end{lemma}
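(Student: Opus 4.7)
The plan is to exhibit explicit existential formulas defining $S_A(F)$ and $T_A(F)$. Fix an $F$-basis $e_1,\dots,e_{\ell^2}$ of the degree-$\ell$ central simple algebra $A$ (recall that $\dim_F A=\ell^2$). The crucial point is that, with respect to any such basis, both the reduced trace $Trd$ and the reduced norm $Nrd$ are polynomial maps $F^{\ell^2}\to F$ whose coefficients lie in $F$. Indeed, after base change to a splitting field $E/F$ one has $A\otimes_F E\cong M_\ell(E)$, under which $Trd$ and $Nrd$ become the ordinary matrix trace and determinant; these are manifestly polynomial in the matrix entries, and by Galois descent the resulting polynomial expressions in the coordinates relative to $e_1,\dots,e_{\ell^2}$ have coefficients in $F$.

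Let $N,T\in F[Y_1,\dots,Y_{\ell^2}]$ be the polynomials representing $Nrd$ and $Trd$ respectively. Then, for any field extension $F'/F$,
$$S_A(F')=\{z\in F':\ \exists y_1,\dots,y_{\ell^2}\in F'\;(\,N(y_1,\dots,y_{\ell^2})=1\ \wedge\ T(y_1,\dots,y_{\ell^2})=z\,)\},$$
so $S_A$ is defined by a single existential $\mathcal{L}_{\rm rings}$-formula with parameters in $F$ and a single free variable $z$. Hence $S_A$ is a $1$-dimensional diophantine family over $F$.

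For $T_A$ there are two cases. If $\ell>2$ then $T_A=S_A$ by definition and nothing further is needed. If $\ell=2$ then $T_A(F')=S_A(F')-S_A(F')$ is the image of $S_A\times S_A$ under the subtraction map $(u,v)\mapsto u-v$; equivalently, $z\in T_A(F')$ iff there exist tuples $y,y'\in (F')^{\ell^2}$ with $N(y)=N(y')=1$ and $T(y)-T(y')=z$, which is again a single existential formula with one free variable. Alternatively, one appeals to the closure of diophantine families under cartesian products and images of rational functions recalled in Section~4. In both cases $T_A$ is a $1$-dimensional diophantine family over $F$. The only non-routine ingredient in the whole argument is the polynomiality of $Nrd$ and $Trd$, which is standard and presents no genuine obstacle.
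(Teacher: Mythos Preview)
Your argument is correct and is the standard one. The paper itself does not supply a proof of this lemma at all: it merely cites \cite[Lemma 2.12]{Dittmann} and moves on. What you have written is essentially the argument one finds in Dittmann's paper---fix a basis of $A$, observe that $Nrd$ and $Trd$ are given by polynomials over $F$ in the coordinates (via descent from a splitting field, or equivalently via the reduced characteristic polynomial), and read off the existential formula. Your handling of the $\ell=2$ case and the appeal to closure of diophantine families under products and rational images is also in line with how the paper treats such matters in Section~4. There is nothing missing.
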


\begin{proposition}{\rm \cite[Proposition 2.9]{Dittmann}}\label{global}
    Let $L$ be a global function field and $A$ a central simple algebra over $L$ of prime degree $\ell$. Then $$
    T_A(L)=\bigcap\limits_{\pp\in \Delta(A/L)}\O_\pp 
    $$ 
    where $\Delta(A/L)$ denotes the  finite set of primes of $L$ such that $A$ does not split over $L_\pp.$
\end{proposition}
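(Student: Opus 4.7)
I would prove the two containments of the equality separately.

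For the inclusion $T_A(L)\subseteq \bigcap_{\pp\in\Delta(A/L)}\O_\pp$, I argue locally. Fix $\pp\in\Delta(A/L)$. Since $A$ has prime degree $\ell$ and does not split at $\pp$, the completion $A_\pp:=A\otimes_L L_\pp$ is a central division algebra over the local field $L_\pp$. The valuation $v_\pp$ extends uniquely to $A_\pp$ via $w(x)=\tfrac{1}{\ell}v_\pp(Nrd(x))$, and the unique maximal order $\mathcal{O}_{A_\pp}:=\{x\in A_\pp : w(x)\geq 0\}$ satisfies $Trd(\mathcal{O}_{A_\pp})\subseteq\O_\pp$. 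Hence for any $x\in A$ with $Nrd(x)=1$ we have $w(x)=0$, so $x\in\mathcal{O}_{A_\pp}$ and $Trd(x)\in L\cap\O_\pp$. This proves $S_A(L)\subseteq\O_\pp$ for every $\pp\in\Delta(A/L)$, and the $\ell=2$ difference form $T_A=S_A-S_A$ inherits the same containment.

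For the reverse inclusion, let $\alpha\in L$ be integral at every prime of $\Delta(A/L)$. The plan is to produce $x\in A$ with $Trd(x)=\alpha$ and $Nrd(x)=1$. Any such $x$ generates a maximal commutative subfield $E:=L(x)\subseteq A$ of degree $\ell$, which necessarily splits $A$. Concretely, I would construct a separable monic polynomial
$$p(T) = T^\ell - \alpha\, T^{\ell-1} + \sum_{i=1}^{\ell-2} c_i\, T^{\ell-1-i} + (-1)^\ell \in L[T]$$
so that $E:=L[T]/(p(T))$ splits $A$. The intermediate coefficients $c_i$ remain free, and I would fix them by weak approximation at the finitely many primes of $\Delta(A/L)$ so that, at each such $\pp$, $p$ is irreducible over $L_\pp$. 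The integrality of $\alpha$ at these primes is what permits a Hensel/Krasner-type argument to produce an irreducible local polynomial with the prescribed leading and constant coefficients. Since any field extension of $L_\pp$ of degree $\ell$ splits the central division algebra $A_\pp$, the global field $E$ then splits $A$ at every $\pp\in\Delta(A/L)$, and therefore splits $A$ globally (using that $A$ is already split away from $\Delta(A/L)$ and that $\ell$ is prime). Thus $E$ embeds in $A$ as an $L$-subalgebra, and the image of $T$ yields the desired element $x$. In the case $\ell=2$, the extra flexibility of $T_A=S_A-S_A$ allows writing $\alpha=\alpha_1-\alpha_2$ and handling each summand separately when constraints on a single quadratic polynomial obstruct a direct construction.

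The main obstacle is the reverse inclusion: globalizing the local requirements. At each individual $\pp\in\Delta(A/L)$ the existence of a suitable local polynomial follows reasonably directly from the structure of central division algebras over local fields, but producing a \emph{single} global polynomial $p(T)$ whose middle coefficients interpolate all of these local choices simultaneously — while keeping $\alpha$ fixed as the $(\ell-1)$-st coefficient and $(-1)^\ell$ as the constant term — is the delicate step, relying crucially on weak approximation and the prime-degree hypothesis on $\ell$.
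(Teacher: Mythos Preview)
The paper gives no proof of this proposition; it is quoted directly from \cite[Proposition 2.9]{Dittmann} without argument. Your outline is correct and is essentially the strategy carried out there: the forward inclusion via the unique extension of $v_\pp$ to the local division algebra $A_\pp$, and the reverse inclusion by constructing a degree-$\ell$ splitting subfield $E=L[T]/(p(T))$ with prescribed trace and norm coefficients, globalized by weak approximation and Krasner on the middle coefficients (with the $\ell=2$ case, where no middle coefficients are free, handled exactly via the extra flexibility of $T_A=S_A-S_A$ that you identify).
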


\begin{proposition}\label{local}
    Let $F$ be a local field of positive characteristic and let $A$ be a central simple algebra over $F$ of prime degree $\ell$. If $A$ is not split then $T_A(F)=\O_F.$
 \end{proposition}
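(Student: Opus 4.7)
The plan is to prove the two inclusions separately, writing $D$ for the division algebra underlying $A$ (which exists because $A$ is non-split of prime degree).

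For the inclusion $T_A(F) \subseteq \O_F$, I would use that $F$ is complete with respect to its discrete valuation $v$, so $v$ extends uniquely to $D$ via $w(x) = \frac{1}{\ell} v(\mathrm{Nrd}(x))$, with associated valuation ring $\O_D$ the unique maximal $\O_F$-order. Every $y \in \O_D$ is integral over $\O_F$, so $\mathrm{Trd}(y) \in \O_F$; and $\mathrm{Nrd}(x) = 1$ forces $w(x) = 0$, hence $x \in \O_D$. Therefore $S_A(F) \subseteq \O_F$, and the desired inclusion follows in either case ($\ell = 2$ or $\ell > 2$).

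For the inclusion $\O_F \subseteq T_A(F)$, the main ingredient is that every separable degree-$\ell$ extension $L/F$ embeds into $D$ as a maximal subfield: this holds because $[A]$ has order dividing $\ell$ in $\br(F) \cong \Q/\mathbb{Z}$ and is therefore killed by $L$. Under such an embedding, the reduced trace and reduced norm of $x \in L$ specialize to the field trace and norm $\mathrm{Tr}_{L/F}(x)$ and $N_{L/F}(x)$. Hence for each $a \in \O_F$ it suffices to produce a monic irreducible polynomial
\[ P(X) = X^\ell - a X^{\ell-1} + c_{\ell-2} X^{\ell-2} + \cdots + c_1 X + (-1)^\ell \in \O_F[X]; \]
the image of $X$ in $D$ under $F[X]/(P) \hookrightarrow D$ then has reduced trace $a$ and reduced norm $1$. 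For $\ell > 2$ this directly gives $a \in S_A(F) = T_A(F)$; for $\ell = 2$ one combines two such constructions using $T_A = S_A - S_A$ to write $a = \mathrm{Trd}(x_1) - \mathrm{Trd}(x_2)$.

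The main obstacle is exhibiting such an irreducible $P$. When the residue field $\F_q$ is sufficiently large relative to $\ell$, a counting argument on reductions modulo $\pi$ combined with Hensel's lemma suffices: among the $q^{\ell-2}$ choices of middle coefficients modulo $\pi$, only controllably many produce a reduction with a root in $\F_q$, leaving enough room for an irreducible reduction that lifts. The residue-small case is more delicate; here I would exploit the cyclic-algebra decomposition $D = (L/F, \sigma, \pi)$ with $L/F$ the unramified degree-$\ell$ extension and parametrize norm-$1$ elements of $L$ via Hilbert 90 (writing $x = \sigma(y)/y$), then lift a trace-prescribed element from the residue field $\F_{q^\ell}$ to $\O_L$ by Hensel. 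Verifying that this construction goes through uniformly in positive characteristic, in particular that separability of the produced extension is preserved, is where the main work of the proposition lies.
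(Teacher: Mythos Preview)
The paper's proof is two lines: a non-split central simple algebra of prime degree is a division algebra by Wedderburn, and then the result is quoted directly from \cite[Proposition~2.6]{Dittmann}. You are instead sketching a self-contained proof of that cited proposition, so the two approaches differ at the most basic level: the paper delegates, you attempt the work.

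Your argument for $T_A(F)\subseteq\O_F$ is correct and standard: the unique extension of $v$ to the division algebra forces norm-one elements into $\O_D$, whence their reduced traces lie in $\O_F$. Your strategy for the reverse inclusion---embed separable degree-$\ell$ extensions as maximal subfields and manufacture a monic irreducible $P(X)=X^\ell-aX^{\ell-1}+\cdots+(-1)^\ell\in\O_F[X]$ for each $a\in\O_F$---is also the right idea, and is essentially how Dittmann proceeds. But as you yourself note, the existence of such $P$ is exactly the nontrivial content, and your proposal leaves it as a plan: the counting argument for large residue field is only alluded to, and in the small-residue case your Hilbert~90 parametrization $x=\sigma(y)/y$ does not by itself show that the trace of such $x$ ranges over all of $\O_F$. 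You would still need either an explicit residue-level computation (finding, for each $\bar a\in\F_q$, an irreducible $\bar P\in\F_q[X]$ of the required shape, then lifting) or an alternative argument covering the finitely many exceptional pairs $(q,\ell)$; and when $\ell=p$ you must also verify separability of $P$. None of this is insurmountable, but it is real work that your proposal defers, whereas the paper avoids it entirely by citation.
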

 \begin{proof}
Using Wedderburn's Theorem we see that a non-split central simple algebra of prime degree is a division algebra. Then the result follows from \cite[Proposition 2.6]{Dittmann}.
     
 \end{proof}
The following is essentially \cite[Proposition 5.4]{ADFsiegel}.
 \begin{proposition}\label{brauer}
     For every prime number $\ell$ there exist central simple algebras $A,B$ of degree $\ell$ over $K$ such that
     \begin{enumerate}
         \item neither of them split over $K_\pp$,
         \item for every place $\mathfrak{q}\ne \pp$ of $K$, at least one of them split over $K_{\mathfrak{q}}$.
    \end{enumerate}
 \end{proposition}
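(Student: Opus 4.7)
The plan is to invoke the Hasse--Brauer--Noether fundamental exact sequence for the Brauer group of a global function field
\[
0 \longrightarrow \br(K) \longrightarrow \bigoplus_{\mathfrak{q}\in\mathcal{S}(K)}\br(K_\mathfrak{q}) \xrightarrow{\;\sum \inv_\mathfrak{q}\;} \Q/\Z \longrightarrow 0,
\]
which allows us to prescribe local invariants freely (subject to the sum being zero) and produces a Brauer class over $K$ realizing them. Since $K$ is a global function field, period equals index, so any Brauer class of order $\ell$ is represented by a central simple algebra of degree $\ell$.

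Concretely, since $K$ has infinitely many places, I would first pick two distinct places $\pp_1,\pp_2$ of $K$, both different from $\pp$. Then I would define the Brauer class of $A$ by
\[
\inv_\pp(A)=\tfrac{1}{\ell},\qquad \inv_{\pp_1}(A)=-\tfrac{1}{\ell},\qquad \inv_\mathfrak{q}(A)=0 \text{ for all other } \mathfrak{q},
\]
and analogously define $B$ by
\[
\inv_\pp(B)=\tfrac{1}{\ell},\qquad \inv_{\pp_2}(B)=-\tfrac{1}{\ell},\qquad \inv_\mathfrak{q}(B)=0 \text{ for all other } \mathfrak{q}.
\]
In each case the family of local invariants is finitely supported, the sum is $0$ in $\Q/\Z$, and all invariants lie in $\tfrac{1}{\ell}\Z/\Z$, so the exact sequence yields well-defined Brauer classes of order exactly $\ell$. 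Using the equality of period and index for global fields, I obtain central simple algebras $A,B$ of degree $\ell$ over $K$ with exactly these local invariants.

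It then remains to verify the two claimed splitting properties. A central simple algebra over a local field splits if and only if its local invariant vanishes. Condition (1) is immediate since $\inv_\pp(A)=\inv_\pp(B)=\tfrac{1}{\ell}\neq 0$. For condition (2), fix $\mathfrak{q}\neq \pp$. If $\mathfrak{q}\notin\{\pp_1,\pp_2\}$, then both $A$ and $B$ split over $K_\mathfrak{q}$. If $\mathfrak{q}=\pp_1$, then $B$ splits over $K_{\pp_1}$ (its invariant there is $0$, because $\pp_1\neq \pp_2$), and symmetrically if $\mathfrak{q}=\pp_2$, then $A$ splits over $K_{\pp_2}$. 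In every case at least one of $A,B$ splits locally at $\mathfrak{q}$.

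The only point requiring genuine care is the existence statement: that the prescribed local data actually come from a \emph{degree $\ell$} central simple algebra, not merely from a Brauer class whose representatives might have larger degree. This is exactly where I would cite the period-equals-index theorem for global fields (a consequence of Hasse--Brauer--Noether), which guarantees that a Brauer class of order $\ell$ is represented by a degree $\ell$ CSA. With that in hand, the argument is formal; the analogous proposition over a number field is \cite[Proposition 5.4]{ADFsiegel}, and the function field adaptation uses exactly the same input.
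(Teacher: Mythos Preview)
Your proof is correct and follows essentially the same approach as the paper: both invoke the Hasse--Brauer--Noether exact sequence, prescribe invariants $\tfrac{1}{\ell}$ at $\pp$ and $-\tfrac{1}{\ell}=\tfrac{\ell-1}{\ell}$ at one auxiliary place (a different one for $A$ and $B$), and then appeal to period $=$ index over global fields to realize each class by a degree-$\ell$ algebra. Your verification of conditions (1) and (2) is in fact more explicit than the paper's.
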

 \begin{proof}
     Let $\br(K)$ denote the Brauer group of $K$ and let $\inv_{K_\pp}: \br(K)\to \mathbb{Q}/\mathbb{Z}$ denote the local Hasse invariant. It follows from  \cite[Proposition 6.3.9]{GilleSzamuely} that $\inv_{K_\qq}$ is an isomorphism and thus an algebra splits over $K_\qq$ if and only if its local invariant is $0$. Fix $\qq_1\ne \qq_2$  primes of $K$ both different from $\pp$. Let $[A_{\pp}]:=\inv^{-1}_{K_{\pp}}(\frac{1}{\ell}), [A_{\qq_1}]:=\inv^{-1}_{K_{\qq_1}}(\frac{\ell-1}{\ell})$ and $[A_{\qq_2}]:=\inv^{-1}_{K_{\qq_2}}(\frac{\ell-1}{\ell})$.     Using Hasse's theorem (\cite[Corollary 6.5.4]{GilleSzamuely}) we have an exact sequence 
     \[
\xymatrix{
  0 \ar[r] &  \br(K)  \ar[r]^-{\sum -\otimes K_\qq} & \bigoplus\limits_{\mathfrak{q}\in \mathcal{S}_K}\br(K_\qq)  \ar[r]^-{\sum\inv_{K_\qq}} & \mathbb{Q}/\mathbb{Z}  \ar[r] & 0 
}
\]

Since  $[A_\pp]+[A_{\qq_1}]$ and $[A_\pp]+[A_{\qq_2}]$  belong to the kernel of ${\sum\inv_{K_\qq}}$, then there are unique equivalence classes $[A]$ and $[B]$ such that  $\sum [A]\otimes K_\qq=[A_\pp]+[A_{\qq_1}]$ and $\sum [B]\otimes K_\qq=[A_\pp]+[A_{\qq_2}]$. We infer from this that both $[A], [B]$ have period $\ell.$ Using \cite[Remark 6.5.5]{GilleSzamuely}, we have that the period is equal to the index. Thus, if  $A$ and $B$  denote the unique division algebras in the class of $[A]$ and $[B]$, respectively,  then they have degree $\ell.$  This concludes the proof of the proposition.

 \end{proof}

For central simple algebras $A,B$ over $K$ and an extension $F/K$ we define 

$$
D_{\pp,t_\pp,A,B}(F):=\left \{\frac{x}{1+t_\pp w y}: x,y\in T_A(F)+T_B(F), w\in \gamma(F\setminus \pol(\gamma)), 1+t_\pp wy\ne 0 \right \}
$$
\begin{proposition}\label{local-global}
    If $A,B$ are $K$-algebras as in Proposition \ref{brauer}, then 
    \begin{enumerate}[i.]
        \item $T_A(K_\pp)+T_B(K_\pp)=\O_{\hat{\pp}}$,
        \item for all finite separable extensions $L/K$,
        $$
        T_A(L)+T_B(L)\supseteq \bigcap\limits_{\Pp\in \mathcal{S}^*_\pp}\O_\Pp.
        $$
    \end{enumerate}
\end{proposition}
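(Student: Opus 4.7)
The plan is to deduce both statements from the structural descriptions of $T_A$ and $T_B$ given in Propositions \ref{local} and \ref{global}, combined with the splitting pattern from Proposition \ref{brauer}.

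For (i), I first observe that since $A$ has prime degree $\ell$ and does not split over $K_\pp$, the base change $A\otimes_K K_\pp$ is a non-split central simple $K_\pp$-algebra of degree $\ell$. By Wedderburn it has the form $M_n(D)$ with $\deg(D)\cdot n=\ell$, and since $\ell$ is prime and the algebra is non-split we must have $n=1$, i.e.\ $A\otimes_K K_\pp$ is a division algebra. Proposition \ref{local} then gives $T_A(K_\pp)=\O_{\hat\pp}$, and the same argument applied to $B$ yields the equality in (i).

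For (ii), let $L/K$ be a finite separable extension. Applying Proposition \ref{global} to $A\otimes_K L$ and $B\otimes_K L$ yields
\[
T_A(L)=\bigcap_{\Pp\in\Delta_A(L)}\O_\Pp,\qquad T_B(L)=\bigcap_{\Pp\in\Delta_B(L)}\O_\Pp,
\]
where $\Delta_A(L),\Delta_B(L)$ are the finite sets of primes of $L$ at which the respective base changes fail to split. The key structural input is the inclusion $\Delta_A(L)\cap\Delta_B(L)\subseteq\mathcal{S}^*_\pp(L)$: if $\Pp\in\Delta_A(L)\cap\Delta_B(L)$ restricts to a prime $\qq\neq\pp$ of $K$, then by Proposition \ref{brauer}(2) one of $A,B$ splits over $K_\qq$, and splitting is preserved under base change to $L_\Pp$, contradicting $\Pp\in\Delta_A(L)\cap\Delta_B(L)$.

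Given $x\in\bigcap_{\Pp\in\mathcal{S}^*_\pp(L)}\O_\Pp$, I then aim to write $x=a+(x-a)$ with $a\in T_A(L)$ and $x-a\in T_B(L)$. Weak approximation in the global function field $L$, applied to the finite set of valuations $\Delta_A(L)\cup\Delta_B(L)$, produces an $a\in L$ with $v_\Pp(a)\geq 0$ for every $\Pp\in\Delta_A(L)$ and $v_\Pp(a-x)\geq 0$ for every $\Pp\in\Delta_B(L)\setminus\Delta_A(L)$; on the overlap $\Delta_A(L)\cap\Delta_B(L)$, which sits inside $\mathcal{S}^*_\pp(L)$, the condition $v_\Pp(a-x)\geq 0$ follows automatically from $v_\Pp(x)\geq 0$ together with $v_\Pp(a)\geq 0$. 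Hence $a\in T_A(L)$ and $x-a\in T_B(L)$, so $x\in T_A(L)+T_B(L)$. The real content of the argument is the inclusion $\Delta_A(L)\cap\Delta_B(L)\subseteq\mathcal{S}^*_\pp(L)$, which is precisely where Proposition \ref{brauer} is used; the approximation step itself is routine bookkeeping.
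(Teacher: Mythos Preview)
Your proof is correct and follows essentially the same route as the paper's: both parts invoke Propositions \ref{local} and \ref{global} for the structural description of $T_A,T_B$, and part (ii) is handled by a weak approximation argument splitting $x$ into a piece integral at $\Delta_A(L)$ and a piece integral at $\Delta_B(L)$. Your write-up is in fact slightly more explicit than the paper's, since you spell out the inclusion $\Delta_A(L)\cap\Delta_B(L)\subseteq\mathcal{S}^*_\pp(L)$ and its derivation from Proposition \ref{brauer}, whereas the paper leaves this step implicit.
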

\begin{proof}
    For the first clause, we have that $T_A(K_\pp)+T_B(K_\pp)=\O_{\hat{\pp}}+\O_{\hat{\pp}}=\O_{\hat{\pp}}$ by Proposition \ref{local}. Next, let $L/K$ be a finite separable extension. Using Proposition \ref{global} we obtain $$T_A(L)+T_B(L)=\bigcap\limits_{\Qq\in \Delta(A/L)} \O_\Qq+\bigcap\limits_{\Qq\in \Delta(B/L)} \O_\Qq$$ 
    We claim that $$\bigcap\limits_{\Qq\in \Delta(A/L)} \O_\Qq+\bigcap\limits_{\Qq\in \Delta(B/L)} \O_\Qq=\bigcap\limits_{\Qq\in \Delta(A/L)\cap \Delta(B/L)} \O_\Qq. $$
    To see this, notice that the left hand side is clearly contained in the right hand side. Thus we need to show the other inclusion. Let $x\in \bigcap\limits_{\Qq\in \Delta(A/L)\cap \Delta(B/L)} \O_\Qq$ be given. Using the weak approximation property, choose $y$ such that 
    $$
    v_\Qq(y)\geq 0 \ {\rm for} \ \Qq\in \Delta(B/L)
    $$
    and 
    $$
    v_\Qq(x-y)\geq 0 \  {\rm for} \ \Qq\in \Delta(A/L)\setminus \Delta(B/L)
    $$
    This is possible due to our choice of $A$ and $B$. Observe that $x-y\in \bigcap\limits_{\Qq\in \Delta(A/L)} \O_\Qq$ and $y\in \bigcap\limits_{\Qq\in \Delta(B/L)} \O_\Qq$. This concludes the proof of the proposition. 
\end{proof}

\begin{proposition}\label{keyproplocal}
    If $A,B$ are algebras as in Proposition \ref{brauer}, then $D_{\pp,t_\pp,A,B}(K_\pp)\subseteq \O_{\hat\pp}$
\end{proposition}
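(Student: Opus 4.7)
The plan is to unpack the definition of $D_{\pp,t_\pp,A,B}(K_\pp)$ and show that each of the three kinds of building blocks ($x$, $y$, and $w=\gamma(a)$) lies in $\O_{\hat\pp}$, after which the conclusion is an easy valuation computation. So fix an element of $D_{\pp,t_\pp,A,B}(K_\pp)$, written as $\frac{x}{1+t_\pp w y}$ with $x,y\in T_A(K_\pp)+T_B(K_\pp)$, $w=\gamma(a)$ for some $a\in K_\pp\setminus\pol(\gamma)$, and $1+t_\pp w y\ne 0$.

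First I would observe that by Proposition \ref{brauer}(1) neither $A$ nor $B$ splits over $K_\pp$, so by Proposition \ref{local} we have $T_A(K_\pp)=T_B(K_\pp)=\O_{\hat\pp}$; in particular $x,y\in\O_{\hat\pp}$. This is exactly the content of clause (i) of Proposition \ref{local-global}.

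Next I would handle $w$. The key point is that $K_\pp$ is complete at its unique prime $\hat\pp$ above $\pp$, so $e(\hat\pp\mid\pp)=f(\hat\pp\mid\pp)=1$, i.e.\ $\hat\pp\in\mathcal{S}^{(1,1)}_\pp(K_\pp)$. Lemma \ref{lema1.2} then gives $\pol(\gamma)=\emptyset$ in $K_\pp$ (so the element $a$ causes no trouble) and $v_{\hat\pp}(\gamma(b))\geq 0$ for every $b\in K_\pp$. Applying this to $b=a$ yields $w\in\O_{\hat\pp}$.

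Finally, since $v_{\hat\pp}(t_\pp)=1$ and $w,y\in\O_{\hat\pp}$, the element $t_\pp w y$ lies in the maximal ideal $\hat\pp$, so $1+t_\pp w y\in 1+\hat\pp\subseteq\O_{\hat\pp}^{\times}$. Thus $\frac{x}{1+t_\pp w y}\in\O_{\hat\pp}$ and we are done. There is no real obstacle here: the statement is essentially an assembly of Proposition \ref{local-global}(i) (placing $x,y$ in $\O_{\hat\pp}$) and Lemma \ref{lema1.2} (placing $\gamma(a)$ in $\O_{\hat\pp}$), the whole point of the denominator $1+t_\pp w y$ being precisely to absorb units of the form $1+\hat\pp$ without enlarging the ring.
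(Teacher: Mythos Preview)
Your proof is correct and follows essentially the same route as the paper: invoke Proposition~\ref{local} (via Proposition~\ref{brauer}) to get $T_A(K_\pp)+T_B(K_\pp)\subseteq\O_{\hat\pp}$, invoke Lemma~\ref{lema1.2} with $\hat\pp\in\mathcal{S}^{(1,1)}_\pp(K_\pp)$ to get $\gamma(K_\pp)\subseteq\O_{\hat\pp}$, and then observe $1+t_\pp\O_{\hat\pp}\subseteq\O_{\hat\pp}^\times$. Your write-up is simply a more unpacked version of the paper's two-line argument.
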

\begin{proof}
    Using Proposition \ref{local} and Lemma \ref{lema1.2} we have that  $T_A(K_\pp)+T_B(K_\pp), \gamma(K_\pp)\subseteq \O_{\hat\pp}$ and $1+t_\pp \O_{\hat\pp}\subseteq  \O_{\hat\pp}^\times.$ Hence $D_{\pp,t_\pp,A,B}(K_\pp)\subseteq \O_{\hat\pp}$ as required.
\end{proof}
\begin{proposition} \label{keyprop}
    If $A,B$ are $K$-algebras as in Proposition \ref{brauer}, then 
    $$
    D_{\pp,t_\pp,A,B}(L)=R_\pp(L),
    $$
    for any finite separable extension $L/K$.
\end{proposition}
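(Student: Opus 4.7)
The plan is to prove the two set inclusions $D_{\pp,t_\pp,A,B}(L)\subseteq R_\pp(L)$ and $R_\pp(L)\subseteq D_{\pp,t_\pp,A,B}(L)$ separately, with the main work in the second. Throughout, put $\mathcal{F}:=\Delta(A/L)\cap\Delta(B/L)$; from the proof of Proposition \ref{local-global} one has $T_A(L)+T_B(L)=\bigcap_{\Qq\in\mathcal{F}}\O_\Qq$.

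For the first inclusion, I take $z=x/(1+t_\pp wy)\in D(L)$ and any $\Pp\in\mathcal{S}^{(1,1)}_\pp(L)$. The conditions $e(\Pp|\pp)=f(\Pp|\pp)=1$ give $L_\Pp\cong K_\pp$, and Proposition \ref{brauer} forces that neither $A$ nor $B$ splits at $\Pp$, so $\Pp\in\mathcal{F}$. Therefore $x,y\in \O_\Pp$ and, by Lemma \ref{lema1.2}, $w=\gamma(a)\in\O_\Pp$. Consequently $1+t_\pp wy\in\O_\Pp^{\times}$ and $z\in\O_\Pp$; intersecting over all such $\Pp$ yields $z\in R_\pp(L)$.

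For the reverse inclusion, I first observe that $\mathcal{F}\subseteq\mathcal{S}^*_\pp(L)$ (at any $\qq\neq\pp$ one of $A,B$ splits by Proposition \ref{brauer}, hence it also splits at every prime above $\qq$ in $L$) and $\mathcal{S}^{(1,1)}_\pp(L)\subseteq\mathcal{F}$ by the argument of the previous paragraph. Thus the only obstruction to $z\in R_\pp(L)$ lying in $T_A(L)+T_B(L)$ comes from the finite ``bad'' set $\mathcal{E}:=\mathcal{F}\setminus\mathcal{S}^{(1,1)}_\pp(L)$, at whose primes $z$ may have arbitrary negative valuation. Set $k_\Pp:=\max\{0,-v_\Pp(z)\}$ for $\Pp\in\mathcal{E}$. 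Using Lemma \ref{keylemma} to get a local witness at each $\Pp\in\mathcal{E}$ with $v_\Pp(\beta(a_\Pp))\leq 0$, combined with weak approximation in $L$ and the continuity of the rational function $\gamma$ off its poles, I would produce a single $a\in L\setminus\pol(\gamma)$ such that $w:=\gamma(a)$ satisfies $v_\Pp(t_\pp w)\leq 0$ at every $\Pp\in\mathcal{E}$. A second application of weak approximation then yields $y\in L$ with $v_\Pp(y)\geq 0$ for each $\Pp\in\mathcal{F}\cap\mathcal{S}^{(1,1)}_\pp(L)$ and, at each $\Pp\in\mathcal{E}$, with $y$ close enough to $-(t_\pp w)^{-1}$ in the $\Pp$-adic topology to force $v_\Pp(1+t_\pp wy)\geq k_\Pp$; such a $y$ automatically lies in $\bigcap_{\Qq\in\mathcal{F}}\O_\Qq=T_A(L)+T_B(L)$. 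Setting $x:=z(1+t_\pp wy)$, a valuation-by-valuation check at each $\Qq\in\mathcal{F}$ verifies $x\in T_A(L)+T_B(L)$, and therefore $z=x/(1+t_\pp wy)\in D_{\pp,t_\pp,A,B}(L)$.

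The main technical obstacle is this joint approximation step: the parameters $w=\gamma(a)$ and $y$ must be chosen so that their local behavior at the finitely many primes of $\mathcal{F}$ simultaneously places $y$ in $T_A(L)+T_B(L)$, keeps $a$ outside $\pol(\gamma)$, and makes $z(1+t_\pp wy)$ integral at every $\Qq\in\mathcal{F}$. The elasticity encoded by the denominator $1+t_\pp wy$, with $w$ ranging over the image of the Kochen operator, is exactly what compensates for the possibly strict inclusion $T_A(L)+T_B(L)\subsetneq R_\pp(L)$ at the primes of $\mathcal{E}$.
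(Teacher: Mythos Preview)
Your proposal is correct and follows essentially the same approach as the paper: both inclusions are argued the same way, with the reverse inclusion using Lemma~\ref{keylemma} to find local witnesses at the ``bad'' primes, then weak approximation plus continuity of $\gamma$ to globalize to a single $w=\gamma(a)$, followed by a second weak approximation to choose $y$ so that $x=z(1+t_\pp wy)$ and $y$ both land in $T_A(L)+T_B(L)$. The only cosmetic difference is that you organize the argument around $\mathcal{F}=\Delta(A/L)\cap\Delta(B/L)$ (using the exact equality $T_A(L)+T_B(L)=\bigcap_{\Qq\in\mathcal{F}}\O_\Qq$ extracted from the proof of Proposition~\ref{local-global}), whereas the paper works with the possibly larger set $\mathcal{S}_\pp^*(L)$ and invokes only the inclusion of Proposition~\ref{local-global}(ii); this changes nothing of substance.
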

\begin{proof}
    First we show that $D_{\pp,t_\pp,A,B}(L)\subseteq R_\pp(L)$. We proceed by cases. On one hand, if $\mathcal{S}^{(1,1)}_\pp(L)=\emptyset$, then $R_\pp(L)=L$. Thus, clearly $D_{\pp,t_\pp,A,B}(L)\subseteq L$. On the other hand, if $\mathcal{S}^{(1,1)}_\pp(L)\ne\emptyset$, then $L_\Pp=K_\pp$. Thus $D_{\pp,t_\pp,A,B}(L_\Pp)\subseteq \O_{L_\Pp}$ by the previous Proposition. Hence
    $$D_{\pp,t_\pp,A,B}(L)\subseteq \bigcap\limits_{\Pp\in \mathcal{S}^{(1,1)}_\pp(L)}\O_{L_\Pp}\cap L= \bigcap\limits_{\Pp\in \mathcal{S}^{(1,1)}_\pp(L)}\O_{\Pp}=R_\pp(L),$$ which is what we wanted to prove. 

    To prove the other inclusion, let $r\in R_\pp(L)\setminus\{0\}$ be given. Fix  enumerations $\Pp_1,\dots,\Pp_k$ and $\mathfrak{Q}_1,\dots,\mathfrak{Q}_\ell$ of $\mathcal{S}^{(1,1)}_\pp(L)$ and $\mathcal{S}_\pp^*\setminus\mathcal{S}^{(1,1)}_\pp(L)$, respectively. Here  we set $k=0$  ($\ell=0$) if  $\mathcal{S}^{(1,1)}_\pp(L)=\emptyset$ ($\mathcal{S}_\pp^*\setminus\mathcal{S}^{(1,1)}_\pp(L)=\emptyset)$.  First suppose both $k,\ell$ are nonzero. Using Lemma \ref{keylemma}, choose, for each $i\in \{1,\dots,\ell\}$, $z_i\in L$ such that
    $$
    v_{\mathfrak{Q}_i}((t_\pp\gamma(z_i))^{-1})\geq 0.
    $$
Using weak approximation and the continuity of rational functions (with respect to the topology induced by the valuations), pick a $z\in L$ such that 
$$
v_{\mathfrak{Q}_i}((t_\pp\gamma(z))^{-1})\geq 0, \ \ \ {\rm for} \ i\in\{1,\dots,\ell\}. 
$$
Using weak approximation again, choose $y\in L$ such that 
\begin{equation}\label{eq1}
    v_{\mathfrak{Q}_i}((t_\pp\gamma(z))^{-1}+y)\geq \max\{0,-v_{\mathfrak{Q}_i}(rt_\pp\gamma(z))\},\ \ \ i\in \{1,\dots,\ell\}
\end{equation}

and 

\begin{equation}\label{eq2}
v_{\Pp_i}(y)\geq 0,\ \ \ i\in\{1,\dots,k\}.
\end{equation}

Using clauses (\ref{eq1}) and (\ref{eq2}) we obtain that $y\in \bigcap\limits_{\Pp\in \mathcal{S}_\pp^*} \O_\Pp\subseteq T_A(L)+T_B(L)$. Let $x:=r(1+t_\pp\gamma(z)y$. It follows from our choice of $z$ and $y$ that $v_{\mathfrak{Q}_i}(x)\geq 0$ for $i\in\{1,\dots,\ell\}.$  Notice that $r,t_\pp, \gamma(z), y\in \bigcap\limits_{i=1}^k \O_{\Pp_i}.$ Thus $x\in \bigcap\limits_{\Pp\in \mathcal{S}_\pp^*} \O_\Pp$. Since $$\bigcap\limits_{\Pp\in \mathcal{S}_\pp^*} \O_\Pp\subseteq T_A(L)+T_B(L), $$ then by Proposition \ref{global}, we have that $r=x(1+t_\pp\gamma(z)y)^{-1}\in D_{\pp,t_pp,A,B}$ as required. Next suppose $k=0$. In this case, choose $y$ satisfying clause (\ref{eq1}) and the proof goes as before.  Finally suppose $\ell=0.$ Here we set $x:=r$ (which belongs to $T_A(L)+T_B(L)$ by assumption), $y=1$ and $z=0$. We thus have $r=\frac{x}{1+t_\pp \gamma(0)y}\in D_{\pp,t_\pp,A,B}(L).$ This concludes the proof of the proposition.
\end{proof}

We are ready to prove the main Theorem of the section.
\begin{thm}\label{thm:siegel}
    Assume ${\rm (R4)}$. Let $K$ a global function field and $\pp$ a place of $K$, there exists $N$ such that $\pi_\pp(L)\leq N$ for any finite separable extension $L/K$.
\end{thm}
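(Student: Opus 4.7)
The plan is to assemble the pieces already developed in the previous sections and observe that the theorem follows almost mechanically. Specifically, the strategy is to exhibit a single $1$-dimensional Diophantine family $D$ over $K$ satisfying the two hypotheses of Proposition \ref{propdioph} with $\mathcal{K}$ taken to be the class of all finite separable extensions $L/K$.

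First I would fix a prime $\ell$ (say $\ell=2$, given that $K$ has odd characteristic) and invoke Proposition \ref{brauer} to obtain central simple algebras $A,B$ of degree $\ell$ over $K$ such that neither of them splits over $K_\pp$ but at every other place of $K$ at least one of them splits. With these in hand, I would consider the Diophantine family
$$D(F):=D_{\pp,t_\pp,A,B}(F)=\left\{\frac{x}{1+t_\pp w y}: x,y\in T_A(F)+T_B(F),\ w\in \gamma(F\setminus \pol(\gamma)),\ 1+t_\pp w y\ne 0\right\},$$
which is $1$-dimensional and Diophantine over $K$ because $T_A,T_B$ are $1$-dimensional Diophantine families and Diophantine families are closed under the relevant algebraic operations (sums, products, images of rational functions, and the free parameter $w$ ranging over the image of $\gamma$, itself defined by an existential formula).

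Next I would verify the two hypotheses of Proposition \ref{propdioph} for this $D$ and this choice of $\mathcal{K}$. Hypothesis (i), namely $D(L)=R_\pp(L)$ for every finite separable extension $L/K$, is precisely the content of Proposition \ref{keyprop}. Hypothesis (ii), namely $D(K_\pp)\subseteq \O_{\hat \pp}$, is precisely Proposition \ref{keyproplocal}. With both hypotheses checked, Proposition \ref{propdioph} (whose proof uses (R4) via the model-theoretic transfer of Proposition \ref{propADFaxiom}) immediately yields an integer $N$ such that $\pi_\pp(L)\leq N$ for every finite separable extension $L/K$, which is the desired conclusion.

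There is essentially no remaining obstacle: the substantial work has already been carried out in Sections 2--5, and what remains is only the bookkeeping of combining them. The most delicate input being invoked is Proposition \ref{propdioph}, which is where (R4) is actually used; this is what allows one to pass the containment $D(K_\pp)\subseteq\O_{\hat\pp}$ (a statement about the completion) to the corresponding containment $D(F)\subseteq\O_\Pp$ for every extension $F/K$ and every $\Pp\in\mathcal{S}^{(1,1)}_\pp(F)$, via the henselization and the coincidence of universal theories. Once that transfer is available, the uniform bound $N$ drops out from the stratification $R_\pp(F)=\bigcup_n R_{\pp,n}(F)$ together with \cite[Proposition 3.9]{ADFsiegel}.
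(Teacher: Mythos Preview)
Your proposal is correct and follows essentially the same approach as the paper: choose $A,B$ via Proposition~\ref{brauer}, take $D=D_{\pp,t_\pp,A,B}$ and $\mathcal{K}$ the class of finite separable extensions of $K$, verify the two hypotheses of Proposition~\ref{propdioph} using Propositions~\ref{keyprop} and~\ref{keyproplocal}, and conclude. The only cosmetic difference is that the paper does not bother to fix a specific prime $\ell$.
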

\begin{proof}
    Choose algebras $A,B$ according to Proposition \ref{brauer}. Let $\mathcal{K}$ denote the class of finite separable extensions of $K$. Notice that by Proposition \ref{keyproplocal} and Proposition \ref{keyprop}  the class $\mathcal{K}$ and the diophantine family $D_{\pp,t_\pp,A,B}$ satisfy the assumptions of Proposition \ref{propdioph}. Finally, applying Proposition \ref{propdioph} we obtain the desired result.
\end{proof}

\section{Kummer extensions}
Let $p$ be an odd rational prime, let $q$ be a power of $p$ and let $n$ be natural number coprime with $p$. Let $K=\F_q(t)$ and $\O_K:=\F_q[t]$. Let $P_n^+$ denote the set of irreducible monic polynomials of $\O_K$ of degree a multiple of $n$. Suppose that there is a primitive $n$-th root of unity $\zeta_n\in K$. 
Let $\{ p_m(t): m\in\mathbb{N}\}$ be an enumeration of $P^+_n$. We define recursively a tower of fields as follows $K_0=K$ and $K_{m+1}:=K_m\left (\sqn{p_m(t)}\right ).$ Let $\O_{K_m}:={\rm ic}_{K_m}(\O_K)$ where ${\rm ic}_{K_m}(\O_K)$ denotes the integral closure of $\O_K$ in $K_m$. Finally, let $K_{\rm inf}:=\bigcup\limits^\infty_{m=1}K_m$.
\begin{proposition}
    For any $p(t)\in P_n^+$, $K\left (\sqn{p(t})\right )\subseteq \F_q((\frac{1}{t}))$. 
\end{proposition}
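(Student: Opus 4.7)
The plan is to embed $K = \F_q(t)$ into the Laurent series field $\F_q((1/t))$ in the standard way (so that $1/t$ is the uniformizer of the infinite place) and to show directly that an $n$-th root of $p(t)$ already lives inside $\F_q[[1/t]][t]$, which suffices since $K(\sqn{p(t)})$ is generated over $K$ by any single $n$-th root of $p(t)$.

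Let $p(t) \in P_n^+$ have degree $dn$ for some positive integer $d$. First I would write
\[
p(t) = t^{dn}\bigl(1 + a_1 (1/t) + a_2 (1/t)^2 + \cdots + a_{dn} (1/t)^{dn}\bigr),
\]
so that $p(t) = t^{dn} \cdot u$, where $u \in 1 + (1/t)\F_q[[1/t]]$ is a one-unit in the complete discrete valuation ring $\F_q[[1/t]]$. The element $t^{dn}$ is obviously an $n$-th power in $\F_q((1/t))$, so the entire question reduces to producing an $n$-th root of $u$ inside $\F_q[[1/t]]$.

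The key step is Hensel's lemma applied to the polynomial $f(X) = X^n - u \in \F_q[[1/t]][X]$. Evaluating at $X = 1$ we have $f(1) = 1 - u \in (1/t)\F_q[[1/t]]$, while $f'(1) = n$, which is a unit in $\F_q[[1/t]]$ because by hypothesis $\gcd(n, p) = 1$ and hence $n \neq 0$ in $\F_q$. Since $\F_q[[1/t]]$ is a complete (hence henselian) discrete valuation ring, Hensel's lemma yields a unique root $\alpha \in 1 + (1/t)\F_q[[1/t]]$ with $\alpha^n = u$.

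Setting $\beta := t^d \alpha \in \F_q((1/t))$, we get $\beta^n = t^{dn} u = p(t)$, so $\beta$ is an $n$-th root of $p(t)$ inside $\F_q((1/t))$. Consequently $K(\sqn{p(t)}) = K(\beta) \subseteq \F_q((1/t))$, as required. There is no real obstacle here beyond ensuring the coprimality hypothesis is used in exactly the right spot, namely to guarantee that $n$ is invertible in the residue field so that Hensel's lemma applies; the rest is just bookkeeping with the $(1/t)$-adic expansion.
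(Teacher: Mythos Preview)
Your proof is correct and follows essentially the same approach as the paper: factor $p(t)=t^{dn}u$ with $u$ a $1$-unit in $\F_q[[1/t]]$, apply Hensel's lemma to $X^n-u$ (using $\gcd(n,p)=1$), and multiply the resulting root by $t^d$ to obtain an $n$-th root of $p(t)$ in $\F_q((1/t))$. Your write-up is in fact slightly more explicit than the paper's in verifying the derivative condition and assembling the final root $\beta=t^d\alpha$.
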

\begin{proof}
    Write $p(t)=t^{m}+a_{m-1}t^{m-1}+\dots+a_0$ with $a_{m-1},\dots,a_0\in \F_q$. Then $t^{-m}p(t)=1+\frac{a_{m-1}}{t}+\dots+\frac{a_0}{t^m}\in \F_q[[\frac{1}{t}]] $. Applying Hensel's Lemma to the valuation ring $\F_q[[\frac{1}{t}]]$ and to the polynomial $\alpha(X):=X^n-t^{-m}p(t)$. We obtain that $\alpha(X)$ has an $n$--root in $\F_q[[\frac{1}{t}]]$. Since $n|m$, then $\sqn{p(t)}\in \F_q((\frac{1}{t}))$ as required. 
\end{proof}
\begin{proposition}
   Let $p(t)\in P_n^+$. Let $F:=K\left (\sqn{p(t})\right )$, and $\O_F:={\rm ic}_F(\O_K)$. Then $\O_F=\O_K[\sqn{p(t)}]$ and the discriminant $\mathfrak{d}_{\O_F/\O_K}=\pp^{n-1}$ where $\pp$ denotes the zero of $p(t)$. 
\end{proposition}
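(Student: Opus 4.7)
The plan is to prove both assertions by leveraging the fact that $\alpha := \sqn{p(t)}$ satisfies the polynomial $f(X) := X^n - p(t) \in \O_K[X]$, which turns out to be Eisenstein at $\pp$.

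First I would verify that $\alpha$ is integral over $\O_K$ (immediate from $f$ being monic), so $\O_K[\alpha] \subseteq \O_F$. Then I would compute the discriminant of the power basis $1, \alpha, \ldots, \alpha^{n-1}$: for a pure equation $X^n - a$ the polynomial discriminant is $(-1)^{n(n-1)/2} n^n a^{n-1}$, so
\[
\dd(\O_K[\alpha]/\O_K) = (-1)^{n(n-1)/2} n^n\, p(t)^{n-1} \cdot \O_K.
\]
Since $\gcd(n,p)=1$, $n$ is a unit in $\F_q$ and hence in $\O_K$, so this discriminant ideal equals $\pp^{n-1}$.

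Next I would show $\O_F = \O_K[\alpha]$ by checking the inclusion $\O_K[\alpha] \subseteq \O_F$ is an equality at every prime of $\O_K$. At any prime $\qq \neq \pp$, the discriminant $\dd(\O_K[\alpha]/\O_K)$ is a unit locally (as $p(t) \notin \qq$), which forces $(\O_K[\alpha])_\qq = (\O_F)_\qq$ by the standard relation $[\O_F:\O_K[\alpha]]^2 \cdot \dd(\O_F/\O_K) = \dd(\O_K[\alpha]/\O_K)$. At $\pp$, the polynomial $f(X) = X^n - p(t)$ is monic with $v_\pp(p(t)) = 1$, so it is Eisenstein at $\pp$; therefore $\O_{K,\pp}[\alpha]$ is already a complete (henselized) discrete valuation ring, totally ramified of degree $n$ over $\O_{K,\pp}$ with uniformizer $\alpha$, and the unique prime $\Pp$ above $\pp$ satisfies $e(\Pp|\pp) = n$ with $(\O_F)_\pp = \O_{K,\pp}[\alpha]$. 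Combining the local equalities at every prime gives $\O_F = \O_K[\alpha]$ globally.

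Finally, since $\O_F$ is generated over $\O_K$ by a single element with minimal polynomial $f$, the discriminant ideal coincides with the discriminant of the power basis, yielding $\dd_{\O_F/\O_K} = \pp^{n-1}$. The main subtlety is the verification that Eisenstein descent together with unit-discriminant at other primes suffices to conclude $\O_F = \O_K[\alpha]$ on the nose; I expect this step to require the most care, but it is standard once framed as a prime-by-prime local computation on the Dedekind domain $\O_K$.
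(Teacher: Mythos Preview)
Your argument is correct and follows the same prime-by-prime local strategy as the paper. Away from $\pp$ you use that the discriminant of the power basis is a unit to force $(\O_F)_\qq=(\O_K[\alpha])_\qq$; the paper phrases this equivalently as $v_\Pp(\varphi'(u))=0$ and cites \cite[Corollary~3.5.11]{stichtenoth}. At $\pp$ you invoke the Eisenstein criterion to see total ramification and $(\O_F)_\pp=\O_{K,\pp}[\alpha]$, whereas the paper appeals to the structure theory of Kummer extensions \cite[Proposition~6.3.1 and Corollary~3.5.12]{stichtenoth}. For the discriminant you compute $\operatorname{disc}(X^n-p(t))$ directly, while the paper instead determines the different $(n-1)\Pp$ from tame total ramification and then applies Dedekind's different theorem; both routes give $\pp^{n-1}$. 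Your approach is slightly more self-contained, the paper's more reference-driven, but the underlying ideas coincide. One small wording issue: the localization $\O_{K,\pp}$ is neither complete nor henselian, so drop the parenthetical ``complete (henselized)''; the Eisenstein argument that $R[\alpha]$ is a DVR with uniformizer $\alpha$ is valid over any DVR $R$, so your conclusion is unaffected.
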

\begin{proof}
Let $u=\sqn{p(t)}$. We claim that $1,\dots,u^{n-1}$ is an $\O_K$--integral basis. To see this, it is sufficient to prove that it is locally a basis for each prime $\qq$ different from the prime at infinity $\pp_\infty$.   Let $\Pp$ be a prime of $F$ not above $\pp$. Then $v_\Pp(\varphi'(u))=0$. Using  \cite[Corollary 3.5.11]{stichtenoth} we obtain that $1,u,\dots,u^{n-1}$ is an integral basis at all primes different from  $\pp$ and $\pp_\infty$. Using  \cite[Proposition 6.3.1]{stichtenoth} we obtain that $\pp$ is completely ramified and no other prime (including the prime at infinity) ramifies. Finally, applying \cite[Corollary 3.5.12]{stichtenoth} we have that $1,u,\dots,u^{n-1}$ is also an integral basis at $\pp$. Next we compute the discriminant. From the previous calculation and Dedekind different's Theorem, we have that  $\dif(\O_F/\O_K)=(n-1) \Pp$ where $\Pp$ denotes the only prime in $F$ below $\pp$. Thus $\mathfrak{d}_{\O_F/\O_K}=\pp^{n-1}$. This concludes the proof of the Proposition.\end{proof}

\begin{proposition}
    For each $N$, $\O_{K_{N+1}}=\O_{K_N}[\sqn{p_N(t)}]$.
\end{proposition}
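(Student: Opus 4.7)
The plan is to adapt the single-step argument of the previous proposition with base ring $\O_{K_N}$ in place of $\O_K$. Set $u := \sqn{p_N(t)}$ and $\varphi(X) := X^n - p_N(t) \in \O_{K_N}[X]$; the goal is to show that $1, u, u^2, \dots, u^{n-1}$ is an $\O_{K_N}$-integral basis of $\O_{K_{N+1}}$, which immediately yields $\O_{K_{N+1}} = \O_{K_N}[u]$.

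First I verify that $[K_{N+1}:K_N] = n$, so that these elements actually form a $K_N$-basis of $K_{N+1}$. A straightforward induction on $N$, using the previous proposition together with the fact that ramification in a compositum $K_m \cdot K(\sqn{p_m(t)})$ over $K_m$ is inherited from $K(\sqn{p_m(t)})/K$, shows that every prime $\qq$ of $K_N$ lying above $\pp_N$ is unramified over $K$; hence $v_\qq(p_N(t)) = 1$. Since $\zeta_n \in K \subseteq K_N$, Kummer theory identifies $[K_{N+1}:K_N]$ with the order of $p_N(t)$ in $K_N^\times/(K_N^\times)^n$, and the identity $v_\qq(p_N(t)^k) = k$ forces $n \mid k$ whenever $p_N(t)^k$ lies in $(K_N^\times)^n$; the order is therefore exactly $n$.

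Next I check local integrality of the candidate basis at every prime $\qq$ of $K_N$ lying above a finite prime of $K$, which by the usual characterization of integral closure suffices to conclude. For $\qq$ not above $\pp_N$ one has $v_\qq(p_N(t)) = 0$, hence $v_\Pp(\varphi'(u)) = v_\Pp(n u^{n-1}) = 0$ at every $\Pp \mid \qq$ in $K_{N+1}$, using $\gcd(n,p) = 1$; \cite[Corollary 3.5.11]{stichtenoth} then supplies the local integral basis. For $\qq$ above $\pp_N$, the hypothesis $v_\qq(p_N(t)) = 1$ (coprime to $n$) together with the presence of $\zeta_n$ permits an application of \cite[Proposition 6.3.1]{stichtenoth} to the Kummer extension $K_{N+1}/K_N$, yielding that $\qq$ is totally ramified; then \cite[Corollary 3.5.12]{stichtenoth} extends the local integral basis across these primes, exactly mirroring the step carried out in the single-generator case.

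The main obstacle I foresee is the inductive bookkeeping on ramification in the tower, needed to guarantee that the ramification locus of $K_N/K$ is contained in the set of primes lying over $\{\pp_0, \dots, \pp_{N-1}\}$ and in particular avoids $\pp_N$. This reduces, step by step, to the claim that adjoining $\sqn{p_m(t)}$ to $K_m$ introduces no ramification at primes of $K_m$ outside those above $\pp_m$, which follows from the previous proposition combined with the standard transitivity of ramification in towers and compositums. Once this bookkeeping is in place, the rest is a direct transcription of the previous proposition's proof.
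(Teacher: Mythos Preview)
Your argument is correct and follows a genuinely different route from the paper. You work \emph{relatively} over $\O_{K_N}$, re-running the single-step proof of the previous proposition with $K_N$ in place of $K$: the only new input you need is that every prime of $K_N$ above $\pp_N$ is unramified over $K$ (so that $v_\qq(p_N(t))=1$), which you obtain by an easy induction on the tower using that each Kummer step $K_{m+1}/K_m$ can ramify only above $\pp_m$. The paper instead works \emph{absolutely} over $\O_K$: it proves by induction that the fields $F_m=K(\sqn{p_m(t)})$ are linearly disjoint, that the discriminants $\mathfrak{d}_{\O_{K_N}/\O_K}$ and $\mathfrak{d}_{\O_{F_{N+1}}/\O_K}$ are comaximal, and then invokes the classical comaximal-discriminant argument (as in \cite[Theorem~4.26]{narkiewicz}) to conclude that the products $\prod_m u_m^{j_m}$ form an $\O_K$-integral basis of $\O_{K_{N+1}}$. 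Your approach is shorter and more direct for the proposition as stated; the paper's approach yields more, namely an explicit $\O_K$-integral basis and the discriminant formula $\mathfrak{d}_{\O_{K_N}/\O_K}=\prod_{m=1}^N \mathfrak{d}_{\O_{F_m}/\O_K}^{n^{N-1}}$, and its linear-disjointness step makes essential use of the embedding into $\F_q((\tfrac{1}{t}))$, which your argument does not need.
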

\begin{proof}
    Let $F_m:=K(\sqn{p_m(T)})$ and $u_m:=\sqn{p_m(T)}$. We shall prove by induction on $N$ that:
    \begin{itemize}
        \item the fields $F_1,\dots,F_N$ are linearly disjoint (inside $\F_q((t))$),
     \item the discriminant $\mathfrak{d}_{\O_{K_N}/\O_K}=\prod\limits_{m=1}^N\mathfrak{d}_{\O_{F_m}/\O_K}^{n^{N-1}}$, and 
     \item the set $\{\prod\limits_{m=1}^N u_m^j: j=1,\dots,n-1\}$ is an $\O_K$--integral basis of $\O_{K_N}.$
     \end{itemize}
It follows from the induction hypothesis and the computation  of the discriminant in the previous Proposition that the discriminants $\mathfrak{d}_{\O_{K_{N}/\O_K}}$ and  $\mathfrak{d}_{\O_{K_{N+1}/\O_K}}$ are comaximal. To prove the first bullet point, it suffices to show that the minimal polynomial $\alpha$ of $u_N$ over $K$ is equal to the minimal   polynomial $\beta$ of $u_N$ over $F_{N+1}$. For this we need to show that the coefficients of $\beta$ belong $K$. Since ${K_N}$ is a Galois extension we have that the coefficients of $\beta$ belong to $K_N\cap F_N:=G$. Notice that $\mathfrak{d}_{\O_G/\O_K}=(1)$ as the discriminants are comaximal. Thus we have that $G$ is an unramified extension of $K$. By \cite[Corollary 6.21]{lenstra} $G$ is a constant extension of $K$. Since $G, K\subseteq \F_q((t))$, then $K=G$. Thus the fields are linearly disjoint. The rest of the proof is completely analogous to the proof of \cite[Theorem 4.26]{narkiewicz}.    
\end{proof}

For $x\in K_{\rm inf}$, let $\sigma_1,\dots,\sigma_n$ be the complete set of $K$-embeddings of $K(x)$ into $\F_q((\frac{1}{t})).$   Define $\|x\|_{\rm max}=\max\{\|\sigma_i(x)\|_\infty:i=1,\dots,n\}. $

\begin{proposition}
For any positive integer $N$. The set $\{x\in K_{\rm inf}: \|x\|_{\rm max}\leq N\}$ is finite.
\end{proposition}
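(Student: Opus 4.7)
The plan is to adapt the classical Northcott--Kronecker argument to this Kummer tower, using the explicit integral basis for $\O_{K_m}$ over $\O_K$ supplied by Proposition 6.3 together with the fact that the infinite place of $K$ splits completely in $K_{\rm inf}/K$.

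First I would record two structural facts. Since $\zeta_n\in K$, each $K_m/K$ is an abelian Kummer extension with Galois group $(\mathbb{Z}/n)^m$, and hence so is $K_{\rm inf}/K$. Proposition 6.1 gives $\sqn{p_m(t)}\in\F_q((1/t))$ for every $m$, whence $K_{\rm inf}\subseteq\F_q((1/t))$; this means the infinite place is completely split in $K_{\rm inf}/K$, so the maps $\sigma_1,\dots,\sigma_n$ enumerate all $n=[K(x):K]$ of the $K$-conjugates of any $x\in K_{\rm inf}$.

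The main step is then a Fourier-inversion estimate on the coefficients. Take $x\in\O_{K_{\rm inf}}$, say $x\in\O_{K_m}$, and expand it in the integral basis from Proposition 6.3:
$$x=\sum_{J=(j_0,\dots,j_{m-1})}c_J\prod_{i}(\sqn{p_i(t)})^{j_i},\qquad c_J\in\F_q[t],\ 0\le j_i<n.$$
The Galois group acts by $\sigma_a:\sqn{p_i}\mapsto\zeta_n^{a_i}\sqn{p_i}$ for $a\in(\mathbb{Z}/n)^m$, so discrete Fourier inversion expresses each term $c_J\prod_i(\sqn{p_i})^{j_i}$ as the character-average $n^{-m}\sum_a\zeta_n^{-a\cdot J}\sigma_a(x)$. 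Because $\|\cdot\|_\infty$ is non-Archimedean, $\|\zeta_n\|_\infty=1$, $n$ is a unit at $\infty$, and $\|\sqn{p_i(t)}\|_\infty=q^{\deg(p_i)/n}$, this yields
$$\|c_J\|_\infty\le N\cdot\prod_i q^{-\deg(p_i)\,j_i/n}.$$
For $c_J\in\F_q[t]$ the left-hand side equals $q^{\deg_t c_J}$, so $c_J=0$ unless $\sum_i\deg(p_i)\,j_i\le n\log_q N$. In particular only the finitely many $p_i$ with $\deg(p_i)\le n\log_q N$ can appear with nonzero exponent, and $x$ is forced into a fixed finite sub-extension of $K_{\rm inf}$; within that extension each of the finitely many admissible multi-indices $J$ admits only finitely many $c_J\in\F_q[t]$ of $t$-degree at most $\log_q N$. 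Summing, the set is finite.

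The main obstacle is that the crucial step $c_J\in\F_q[t]$, i.e.\ integrality of $x$ over $\O_K$, is what converts an infinity-adic bound into a $t$-degree bound: without it the bound $\|x\|_{\rm max}\le N$ in isolation admits, for instance, the infinite family $\{t^{-k}:k\ge 0\}\subseteq K\subseteq K_{\rm inf}$. The statement must therefore be read with $x$ ranging over $\O_{K_{\rm inf}}$—the ring of integers that has been definably recovered inside $K_{\rm inf}$ in the preceding step of the undecidability argument—and under that reading the Fourier-inversion estimate above closes the proof and produces the parametrized family of finite sets required to invoke Theorem~C of~\cite{MSU}.
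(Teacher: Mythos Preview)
Your approach is essentially the same as the paper's: Galois-averaging (discrete Fourier inversion) over the Kummer group to isolate each coefficient in the integral basis, then using the non-archimedean triangle inequality together with $\|u_i\|_\infty=q^{\deg(p_i)/n}$ to force all but finitely many coefficients to vanish. The only cosmetic difference is that the paper peels off one layer $K_{m+1}/K_m$ at a time (recovering $a_0,a_1,\dots$ successively by averaging over the $n$ conjugates $u_m\mapsto\zeta_n^j u_m$) rather than inverting over the full group $(\mathbb{Z}/n)^m$ at once; the content is identical.

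Your diagnosis of the missing integrality hypothesis is also correct, and worth recording: as stated over all of $K_{\rm inf}$ the set contains $\{t^{-k}:k\ge 0\}$ and is infinite. The paper's own proof makes the same tacit assumption when it writes $x=\sum a_i u_m^i$ with $a_i\in\O_{K_m}$, and the application in Theorem~\ref{thm:kummer} only ever uses the result for $x\in\O_{K_{\rm inf}}$, so your reading is the intended one.
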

\begin{proof}
    Let $x\in K_{m+1}$ be given. Set $u_m:=\sqn{p_m(t)}$. Then $x=a_0+a_1u_m+\dots+a_{n-1}u_m^{n-1}$ where $a_0,\dots,a_{n-1}\in \O_{F_m}.$ The conjugates of $x$ are given by $\sum\limits_{i=0}^{n-1} \sigma(a_i)\zeta_n^{ji} u_m^i $ where $\sigma\in \Gal(K_m/K)$ and $j=0,\dots,n-1$.
Using the relation $1+\zeta^i_n+\dots+\zeta_n^{i(n-1)}=0$ we obtain that 
$$
\sum\limits_{j=0}^{n-1}\sum\limits_{i=0}^{n-1} \sigma(a_i)\zeta_n^{ji} u_m^i=\sum\limits_{i=0}^{n-1} \left (\sigma(a_i) u_m^i\sum\limits_{j=0}^{n-1}\zeta_n^{ji}\right )=n\sigma(a_0) 
$$
It follows, from the strong triangle inequality,  that $\|\sigma(a_0)\|_\infty \leq \|x\|_{\rm max}$ for $\sigma \in \Gal(K_m/K)$. Hence, $\|a_0\|_{\rm max}\leq \|x\|_{\rm max}.$

   Using again the strong triangle inequality we have that $\|x-a_0\|_{\rm max}\leq  \|x\|_{\rm max}.$ Then we have $\|u_m^{-1}(x-a_0)\|_{\rm max}\leq \|x\|_{\rm max}q^{-\deg(p_m)/n}$. Applying the previous procedure to $y:=\zeta_n^{-1}u_m^{-1}(x-a_0)=a_1+a_2\zeta_nu_m+\dots+a_{n-1}\zeta_n^{n-2}u_m^{n-2}$. We obtain $\|a_1\|_{\rm max}\leq \|x\|_{\rm max}q^{-\deg(p_m)/n}$. Repeating the same argument $n$ times we obtain that $\|a_i\|_{\rm max}\leq \|x\|_{\rm max}q^{-i\deg(p_m)/n}$ for $i=0,\dots,n-1$. If $q^{\deg(p_m)}>N,$ then it follows that $a_i=0$ for $i=1,\dots, n-1.$ Since there are only finitely many polynomials satisfying the inequality $q^{\deg(p_m)}\leq N$, then there is an $M$ such that $\{x\in K_{\rm inf}: \|x\|_{\rm max}\leq N\}=\{x\in K_M: \|x\|_{\rm max}\leq N\}.$ Since the last set is finite we obtain what we wanted.  
       
\end{proof}

We are ready to prove the main Theorem of the section.
\begin{thm}\label{thm:kummer}
Assume (R4). 
    The $\mathcal{L}_{\rm ring,t}$--first order theory of $K_{\rm inf}$ is undecidable. 

\end{thm}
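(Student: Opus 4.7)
The plan is to execute the three-step program outlined in the introduction: first, produce an existential $\mathcal{L}_{{\rm ring},t}$-definition of the holomorphy ring $R_{\pp_\infty}(K_{\rm inf})$ at the prime at infinity $\pp_\infty$ of $K = \F_q(t)$; second, apply \cite[Theorem B]{MSU} to obtain a first-order definition of $\O_{K_{\rm inf}}$ inside $K_{\rm inf}$; and third, extract a uniformly definable family of finite subsets of $\O_{K_{\rm inf}}$ of unbounded size and invoke \cite[Theorem C]{MSU}. Since $t_{\pp_\infty} = 1/t$ is definable from the constant $t$, the Kochen operator at $\pp_\infty$ is available in $\mathcal{L}_{{\rm ring},t}$.

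For the first step, Theorem \ref{thm:siegel} applied to $\pp_\infty$ yields a bound $N_0$ with $\pi_{\pp_\infty}(L) \leq N_0$ for every finite separable extension $L/K$, so that $R_{\pp_\infty}(L) = R_{\pp_\infty,N_0}(L)$ is the realization of a single existential formula $\varphi(x)$ independent of $L$. Every element of $K_{\rm inf}$ lies in some $K_m$, and $\varphi$ is preserved under passage to field extensions, so $R_{\pp_\infty}(K_{\rm inf}) = R_{\pp_\infty,N_0}(K_{\rm inf})$ is existentially $\mathcal{L}_{{\rm ring},t}$-definable. The first proposition of Section 6 further shows that each $K_m$ embeds into $\F_q((1/t))$, which forces every prime of $K_m$ above $\pp_\infty$ to be completely split, i.e.\ $\mathcal{S}_{\pp_\infty}^{*}(K_m) = \mathcal{S}_{\pp_\infty}^{(1,1)}(K_m)$; consequently $R_{\pp_\infty}(K_{\rm inf})$ coincides with $\{x \in K_{\rm inf} : \|x\|_{\rm max} \leq 1\}$.

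For the third step, once $\O_{K_{\rm inf}}$ has been defined, I would use the parametrized existential formula
$$
\psi(x, z) := \bigl(x \in \O_{K_{\rm inf}}\bigr) \wedge \exists y \, \bigl(x = z y \, \wedge \, y \in R_{\pp_\infty}(K_{\rm inf})\bigr),
$$
whose fibre at $z = t^N$ is precisely $\{x \in \O_{K_{\rm inf}} : \|x\|_{\rm max} \leq q^N\}$. The last proposition of Section 6 guarantees that this fibre is finite, while its cardinality grows without bound as $N \to \infty$. An application of \cite[Theorem C]{MSU} then delivers undecidability of $\O_{K_{\rm inf}}$, and the standard fraction-field interpretation transfers undecidability to $K_{\rm inf}$ itself.

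I expect the main obstacle to lie in the second step, the application of \cite[Theorem B]{MSU}: translating a single existential definition of a holomorphy ring at the place $\pp_\infty$ into a first-order definition of the full integral closure $\O_{K_{\rm inf}}$ across the infinite tower is exactly where that machinery earns its keep, and verifying its input hypotheses in the present setting is the delicate point. A subtler conceptual issue is the identification $R_{\pp_\infty}(K_{\rm inf}) = \{x : \|x\|_{\rm max} \leq 1\}$, which hinges on $\pp_\infty$ splitting completely in every $K_m$---the very feature that makes $\|\cdot\|_{\rm max}$ well-defined on $K_{\rm inf}$.
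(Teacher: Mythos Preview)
Your proposal is correct and matches the paper's approach in all essentials: the same three-step program, the same identification of $R_{\pp_\infty}(K_{\rm inf})$ with $\{x:\|x\|_{\rm max}\leq 1\}$ via complete splitting of $\pp_\infty$, and the same parametrized family $\{x\in\O_{K_{\rm inf}}:\|t^{-N}x\|_{\rm max}\leq 1\}$ made definable through Theorem~\ref{thm:siegel}. The step you flag as the main obstacle---verifying the hypotheses of the \cite{MSU} result that defines $\O_{K_{\rm inf}}$---is dispatched in the paper in one line by observing that $[K_m:K]=n^m$, that each $K_m/K$ is Galois, and by choosing an auxiliary prime $\ell$ coprime to $pn$; so what you anticipated as delicate is in fact routine.
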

   \begin{proof}
       Since $[K_m: K]=n^m$ and the extensions $K_m/K$ are Galois, we can apply  \cite[Theorem A]{MSU}, 
       to the following set up $K, K_{\rm inf}, \mathfrak{S}_K=\{\infty\} $ and $\ell$ (any prime different from $p$ and coprime with $n$). This gives us   a $\mathcal{L}_{\rm ring,t}$--first-order definition of $\O_{K_{\rm inf}}$  without parameters. A standard reduction argument then shows that the undecidability of $K_{\rm inf}$ follows from the undecidability of $\O_{K_{\rm inf}}$.
         Using \cite[Theorem A and Lemma 2.8]{MSU}, we further reduce the problem  to construct a parametrized family of definable sets of $\O_{K_{\rm inf}}$,  that contains sets of arbitrarily large finite cardinalities. To see this, consider the sets $\{x\in K_{\rm inf}:\|t^{-N}x\|_{\rm max}\leq 1\}.$ This last set corresponds to $\{x\in K_{\rm inf}:\forall \Pp\in \mathcal{S}^*_{\pp_\infty}(K_{\rm inf}) (\|t^{-N}x\|_\Pp\leq 1)\ \}$. By Proposition 6.1 the prime at infinity $\pp_\infty$ splits completely in $K_m$ for $m\in \mathbb{N}.$ Therefore $\{x\in K_{\rm inf}:\|t^{-N}x\|_{\rm max}\leq 1\}$ is equal to the integral closure of the Kochen ring in $K_{\rm inf}$. Finally, using Theorem 5.8 we obtain that this family is existentially  $\mathcal{L}_{\rm ring,t}$--definable. This concludes the proof of the Theorem. 
   \end{proof}

\addcontentsline{toc}{section}{References}
	

		
	\medskip

\printbibliography

\noindent Carlos A. Mart\'inez--Ranero\\
Email: cmartinezr@udec.cl\\
Homepage: www2.udec.cl/~cmartinezr\\
\noindent Javier Utreras\\
Email: javierutreras@udec.cl \hspace{10pt} javutreras@gmail.com\\

\noindent Same address: \\
Universidad de Concepci\'on, Concepci\'on, Chile\\
Facultad de Ciencias F\'isicas y Matem\'aticas\\
Departamento de Matem\'atica\\

\end{document}